\documentclass[11pt]{amsart}

\usepackage[T1]{fontenc}
\usepackage{lmodern}
\usepackage{microtype}
\usepackage{mathtools}
\usepackage{xcolor}
\usepackage[margin=1in]{geometry}
\usepackage{enumitem}
\usepackage{needspace}
\usepackage{tikz}
\usepackage{aliascnt}
\usepackage[colorlinks=true,linkcolor=blue!45!black,citecolor=blue!45!black,urlcolor=blue!45!black]{hyperref}
\usepackage[nameinlink,capitalize,noabbrev]{cleveref}

\hypersetup{
  pdftitle={Descent Tops and Pinnacles on 2143- and 3421-Avoiding Permutations},
  pdfauthor={Yue Dong, Lily Li Liu, and Tongyuan Zhao},
  pdfsubject={Permutation patterns, descent tops, pinnacles, and Dumont permutations},
  pdfkeywords={permutation pattern, descent top, pinnacle, Wilf equivalence, Dumont permutation, Cartesian tree, bijection}
}

\numberwithin{equation}{section}
\setlist{leftmargin=2.1em,itemsep=2pt,topsep=4pt}

\newtheorem{theorem}{Theorem}[section]
\newaliascnt{lemma}{theorem}
\newtheorem{lemma}[lemma]{Lemma}
\aliascntresetthe{lemma}
\newaliascnt{proposition}{theorem}
\newtheorem{proposition}[proposition]{Proposition}
\aliascntresetthe{proposition}
\newaliascnt{corollary}{theorem}
\newtheorem{corollary}[corollary]{Corollary}
\aliascntresetthe{corollary}
\theoremstyle{definition}
\newaliascnt{definition}{theorem}
\newtheorem{definition}[definition]{Definition}
\aliascntresetthe{definition}
\theoremstyle{remark}
\newaliascnt{remark}{theorem}

\aliascntresetthe{remark}
\theoremstyle{definition}
\newaliascnt{example}{theorem}
\newtheorem{example}[example]{Example}
\aliascntresetthe{example}

\newcommand{\Av}{\operatorname{Av}}
\newcommand{\DT}{\operatorname{DT}}
\newcommand{\T}{\mathcal{T}}
\newcommand{\dn}{\operatorname{dn}}
\newcommand{\up}{\operatorname{up}}
\newcommand{\id}{\mathrm{id}}
\newcommand{\emptyword}{\varepsilon}
\newcommand{\Pin}{\operatorname{Pin}}
\newcommand{\Pword}{\operatorname{Pword}}
\newcommand{\Val}{\operatorname{Val}}
\newcommand{\Vword}{\operatorname{Vword}}
\newcommand{\Desbot}{\operatorname{Desbot}}
\newcommand{\Asctop}{\operatorname{Asctop}}
\newcommand{\Ascbot}{\operatorname{Ascbot}}
\newcommand{\rev}{\operatorname{rev}}
\newcommand{\inv}{\operatorname{inv}}

\title[Descent tops and pinnacles]{Descent Tops and Pinnacles on
2143- and 3421-Avoiding Permutations}
\author[Y. Dong]{Yue Dong$^{1}$}
\address{$^{1}$School of Mathematical Sciences, Qufu Normal University,
273165 Qufu, P. R. China}
\email{dongyue0810@163.com}
\author[L. L. Liu]{Lily Li Liu$^{1}$}
\address{$^{1}$School of Mathematical Sciences, Qufu Normal University,
273165 Qufu, P. R. China}
\email{liulily@qfnu.edu.cn}
\author[T. Zhao]{Tongyuan Zhao$^{2,*}$}
\address{$^{2}$College of Sciences, China University of Petroleum,
102249 Beijing, P. R. China}
\email{zhaotongyuan@cup.edu.cn}
\thanks{$^{*}$Corresponding author.}
\thanks{Liu is supported by the National Natural Science Foundation of
China (No.~12371330). Zhao was supported by the National Natural Science
Foundation of China (No.~12201641).}
\date{}

\subjclass[2020]{05A05, 05A19}
\keywords{permutation pattern, descent top, pinnacle, Wilf equivalence, Dumont permutation, Cartesian tree, bijection}

\begin{document}

\begin{abstract}
We prove the $2143$--$3421$ case of Burstein's conjecture on the
distribution of descent-top sets by constructing an explicit bijection
between the two avoidance classes. The bijection also preserves the
pinnacle set and reverses the left-to-right order of the pinnacle values.
We combine an involution on maximum Cartesian trees with exchanges of
the final entries of maximal decreasing runs. On an intermediate
avoidance class, we prove that the exchange procedures terminate and
that the output in each direction is independent of the choices made.
This gives an explicit inverse. Restricting the bijection to Dumont
permutations of the first kind proves the conjecture of Burstein and
Jones that $2143$ and $3421$ are Wilf-equivalent on this class.
\end{abstract}

\maketitle

\section{Introduction and main results}\label{sec:introduction}

Write $S_n$ for the permutations of $[n]$. A permutation $\pi\in S_n$
\emph{contains} a pattern $\tau\in S_k$ if some subsequence of $\pi$ is
order-isomorphic to $\tau$; otherwise $\pi$ \emph{avoids} $\tau$. We write
$\Av_n(\tau)$ for the set of $\tau$-avoiding permutations in $S_n$.

For $\pi=\pi_1\cdots\pi_n$, define its set of \emph{descent-top values} by
\[
  \DT(\pi)=\{\pi_i:1\le i<n,\ \pi_i>\pi_{i+1}\}.
\]
Two patterns are Wilf-equivalent if their avoidance classes have the
same cardinality in every length. Refining this equality by a statistic
asks for more: the statistic must have the same distribution on the two
classes. Such refinements were studied by Sagan and
Savage~\cite{SaganSavage2012}; Bloom~\cite{Bloom2014} considered
descent-set refinements for patterns of length four. Here we distinguish
the set of descent-top \emph{values} from the set of descent positions.

Burstein~\cite[Conjecture~11]{Burstein2025} conjectured, in particular,
that $2143$ and $3421$ are Wilf-equivalent on permutations with any fixed
descent-top set. This would imply an earlier conjecture of Burstein and
Jones~\cite[Conjecture~5.1]{BursteinJones2021} on Dumont permutations of
the first kind. Indeed, these are exactly the permutations of $[2n]$
whose descent-top set is $\{2,4,\ldots,2n\}$.
Zhou, Zang, and Yan~\cite{ZhouZangYan2024} proved several other
descent-top-set equivalences from Burstein's conjectures, together with
refinements by peak values. Their results do not cover the pair
$\{2143,3421\}$.

We prove the conjectured descent-top-set equivalence by a bijection that
also controls the peak values and their order. We use the term
\emph{pinnacle} for a peak value: an entry $\pi_i$, with
$2\le i\le n-1$, is a pinnacle if $\pi_{i-1}<\pi_i>\pi_{i+1}$.
Write $\Pin(\pi)$ for the set of these values and
\[
  \Pword(\pi)=\pi_{i_1}\pi_{i_2}\cdots\pi_{i_k},
  \qquad i_1<i_2<\cdots<i_k,
\]
for the word obtained by reading them from left to right.
The term pinnacle follows~\cite{DavisNelsonPetersenTenner2018,Rusu2020,RusuTenner2021};
the same set is called the peak set in~\cite{ZhouZangYan2024}.
Our main result is the following.

\Needspace{10\baselineskip}
\begin{theorem}\label{thm:main}
For every $n\ge0$, there is a bijection
\[
  \Phi:\Av_n(2143)\longrightarrow\Av_n(3421)
\]
such that, for every $\pi\in\Av_n(2143)$,
\begin{equation}\label{eq:main-statistics}
  \DT(\Phi(\pi))=\DT(\pi),
  \qquad
  \Pword(\Phi(\pi))=\rev\bigl(\Pword(\pi)\bigr).
\end{equation}
In particular, $\Phi$ preserves the pinnacle set.
\end{theorem}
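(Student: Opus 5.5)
The plan follows the strategy announced in the abstract: compose two maps through an intermediate avoidance class, checking at each stage that $\DT$ is preserved and that $\Pword$ is either preserved or reversed.

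\textbf{Stage 1: an involution on maximum Cartesian trees.} Encode $\pi$ by its maximum Cartesian tree: the root is $n$, its left subtree is built from the entries before $n$, and its right subtree from the entries after $n$. In this tree, a value $v$ is a descent top exactly when $v$ has a right child. It is a pinnacle exactly when it has both a left and a right child. Let $\iota$ swap left and right children at every node. Then $\pi\mapsto\rev(\pi)$ at the level of permutations, which turns $3421$-avoidance into $1243$-avoidance. However, $\iota$ trades descent tops for ascent tops, so plain reversal does not work. I will therefore use a modified involution. It swaps subtrees only along specific chains, namely the right spines of maximal decreasing runs. It must keep, at each node, the property ``has a right child'', and it must keep the property ``has two children''. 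Pinnacles are then preserved as a set. Their left-to-right reading order, which is the in-order traversal restricted to two-child nodes, should come out reversed, because the modified involution mirrors the tree globally.

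\textbf{Stage 2: exchanges of final entries of decreasing runs.} The image of $\Av_n(2143)$ under the modified involution lies in some intermediate class $\mathcal{C}$. I expect this to be the class avoiding a mirrored pattern together with some extra structural condition. To move from $\mathcal{C}$ to $\Av_n(3421)$, I will repeatedly locate an occurrence of the forbidden pattern. I will then exchange the final entries (the descent bottoms) of two maximal decreasing runs that the occurrence straddles. Exchanging run-final entries does not change which values head the runs, so $\DT$ is unchanged. When the exchange is restricted to runs of length at least two whose final entries are not pinnacle neighbours, the pinnacle word is also unchanged. I will prove termination using a potential function: the inversion number $\inv$, or a lexicographic order on the vector of run-final entries, should strictly move in one direction at each step. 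The reverse procedure, exchanging in the opposite direction, will terminate by the same potential argument.

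\textbf{Stage 3: independence of choices, and bijectivity.} The main obstacle is showing that the output of each procedure does not depend on the order in which exchanges are performed. My plan is a diamond-lemma (Newman) argument. Check local confluence: whenever two different exchanges apply to the same permutation, both results can be brought to a common permutation by further exchanges. Combined with termination, this gives unique normal forms in both directions. I then need two more facts.
\begin{enumerate}
\item The normal forms of the forward procedure are exactly the permutations in $\Av_n(3421)$, and those of the backward procedure are exactly the permutations in the image of $\mathcal{C}$.
\item A single exchange step is reversible.
\end{enumerate}
Together these show that the forward and backward procedures are mutually inverse. Setting $\Phi$ equal to the composition of the two stages, $\DT$ is preserved throughout, and $\Pword$ is reversed exactly once, in Stage 1. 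This gives \eqref{eq:main-statistics}. I expect the case analysis for local confluence, where two pattern occurrences share runs, to be the hardest part. I would first try to reduce it to occurrences whose entries are consecutive descent bottoms.
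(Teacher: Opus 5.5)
Your outline has the paper's architecture: a Cartesian-tree involution, then exchanges of final entries of maximal decreasing runs, with terminal forms in both directions. However, the steps that carry the proof are either unspecified or false as stated.

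\textbf{The involution.} Your Stage~1 map is never actually defined. Swapping ``along right spines of decreasing runs'' is not a specific involution. A map that swaps only at some nodes does not mirror the tree globally, so it need not reverse every pair of pinnacles. The map that works is the one forced by your own two requirements: exchange the two subtrees at \emph{every} binary node, and leave each unary child on its side. Then each node keeps its right-child and binary status. The word $\Pword$ is reversed because two binary nodes always have a binary least common ancestor, and swapping that ancestor reverses their order.

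\textbf{The exchanges do not automatically preserve $\DT$.} In $3154$ (runs $31\mid54$), exchanging $1$ and $4$ gives $3451$, and $3$ stops being a descent top. In general a moved value can become a descent top, or change the status of its new predecessor. So you must restrict to $\DT$-preserving exchanges. These automatically fix all descent positions, hence the pinnacle word, so no condition on run lengths is needed. The real question is which of these exchanges to allow.

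That choice is the missing idea, and your rule fails on a small example. Take $162354\in\Av_6(2143)$, so that $\T(162354)=452361$, with runs $4\mid52\mid3\mid61$. Exchanging the run ends $2,1$ gives $451362$, which still contains $3421$ (as $4532$).
\begin{itemize}
\item If only ends of runs of length at least two may move, the only candidates are $1,2$. Exchanging them undoes the step, so the procedure is stuck outside $\Av_6(3421)$.
\item If all run ends may move, then both $3\leftrightarrow2$ and $4\leftrightarrow3$ preserve $\DT=\{5,6\}$ and lower the inversion number. They lead to the two different $3421$-avoiders $451263$ and $351462$, so the rewriting is not confluent.
\end{itemize}

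The paper resolves this with the threshold $h_i$: the largest lower entry of an increasing pair preceding the $i$th run end $e_i$. A downward exchange swaps $e_j<e_i<h_i$, an upward one swaps $e_i<e_j<h_i$, and everything is done inside the words avoiding the vincular pattern $\nu=3\text{-}4\text{-}21$. In the example, $h_1=-\infty$ forbids moving the $4$. These exchanges preserve every threshold and $\nu$-avoidance. That is what makes the reverse of an upward step a downward step, i.e.\ your item (2).

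The heart of the proof is your item (1), which you do not address. One must show two characterizations:
\begin{itemize}
\item $\Av_n(3421)$ is exactly the set of $\nu$-avoiders with no downward exchange;
\item $\T(\pi)$ avoids $2143$ exactly when $\pi$ avoids $\nu$ and has no upward exchange.
\end{itemize}
Both need extremal choices of the exchanged pair that use $\nu$-avoidance to force $\DT$-preservation. The second also needs a tree argument turning an occurrence of $2143$ in $\T(\pi)$ into a suitable endpoint pair.

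Finally, the paper never checks local confluence. In a component, the movable slots (those with $e_i<h_i$) and the set of values in them are fixed. Hence the downward normal form must be their increasing arrangement, and upward uniqueness follows from a separate cross-block argument. A Newman-lemma proof would also go through once these rules are fixed, since confluence then holds. As proposed, though, it is a case analysis on a rewriting system that has not been defined.
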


The bijection will be written as
\begin{equation}\label{eq:main-bijection}
  \Phi(\sigma)=\dn\bigl(\T(\sigma)\bigr),
  \qquad
  \Phi^{-1}(\rho)=\T\bigl(\up(\rho)\bigr).
\end{equation}
Here $\T$ exchanges the two subtrees at every binary node of the maximum
Cartesian tree, while leaving each unary child on its original side.
The maps $\dn$ and $\up$ repeatedly exchange selected endpoints of
decreasing runs in opposite directions. On a class of words avoiding
the vincular pattern $3\text{-}4\text{-}21$ defined in
\cref{sec:threshold}, we prove that each procedure terminates at a
unique permutation in its connected component of the corresponding
exchange graph. In this class, the
downward terminal permutations avoid $3421$, whereas applying $\T$ to
the upward terminal permutations gives the $2143$-avoiding class.
The tree involution reverses the order of the pinnacles, and the endpoint
exchanges leave that order unchanged.

The ordered refinement in \cref{thm:main} gives a fiberwise statement. For a word $\omega$, write $\omega^{\mathrm r}=\rev(\omega)$.

\begin{corollary}\label{cor:fiberwise}
For every $S\subseteq[n]$ and every word $\omega$ of distinct letters from
$[n]$, the map $\Phi$ restricts to a bijection
\[
\begin{split}
 \{\pi\in\Av_n(2143):\DT(\pi)=S,\ \Pword(\pi)=\omega\}
 \longrightarrow{}&\\
 \{\rho\in\Av_n(3421):\DT(\rho)=S,\ \Pword(\rho)=\omega^{\mathrm r}\}.
\end{split}
\]
Consequently, $2143$ and $3421$ are jointly $(\DT,\Pin)$-Wilf-equivalent.
\end{corollary}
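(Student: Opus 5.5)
The corollary follows from \cref{thm:main} by restricting $\Phi$ to fibers. Fix $S\subseteq[n]$ and a word $\omega$ of distinct letters from $[n]$, and write $A_{S,\omega}$ for the source set and $B_{S,\omega^{\mathrm r}}$ for the target set in the display.

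First, $\Phi$ maps $A_{S,\omega}$ into $B_{S,\omega^{\mathrm r}}$. If $\pi\in A_{S,\omega}$, then \eqref{eq:main-statistics} gives $\DT(\Phi(\pi))=\DT(\pi)=S$ and $\Pword(\Phi(\pi))=\rev(\Pword(\pi))=\omega^{\mathrm r}$. Since $\Phi(\pi)\in\Av_n(3421)$, we get $\Phi(\pi)\in B_{S,\omega^{\mathrm r}}$. The restriction is injective because $\Phi$ is.

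Next, the restriction is surjective. Let $\rho\in B_{S,\omega^{\mathrm r}}$. Since $\Phi$ is a bijection, $\rho=\Phi(\pi)$ for a unique $\pi\in\Av_n(2143)$. Applying \eqref{eq:main-statistics} to $\pi$ gives $\DT(\pi)=\DT(\rho)=S$. It also gives $\rev(\Pword(\pi))=\Pword(\rho)=\omega^{\mathrm r}$. Since $\rev$ is an involution on words, $\Pword(\pi)=\omega$, so $\pi\in A_{S,\omega}$.

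For the consequence, fix $S$ and a set $P\subseteq[n]$. The set of $\pi\in\Av_n(2143)$ with $\DT(\pi)=S$ and $\Pin(\pi)=P$ is the disjoint union of the fibers $A_{S,\omega}$, where $\omega$ ranges over the words whose letters are exactly the elements of $P$. This holds because $\Pin(\pi)$ is the set of letters of $\Pword(\pi)$.

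As $\omega$ runs over all orderings of $P$, the word $\omega^{\mathrm r}$ also runs over all orderings of $P$, each exactly once. Hence the corresponding set in $\Av_n(3421)$ is the disjoint union of the fibers $B_{S,\omega^{\mathrm r}}$ over the same index set. The fiberwise bijections therefore assemble into a bijection between the two sets, so the two sets have equal cardinality. This is the joint $(\DT,\Pin)$-Wilf-equivalence.

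All of the substance lies in \cref{thm:main}. The only point needing care is the surjectivity step, where we use that \eqref{eq:main-statistics} holds for every preimage $\pi$ and that $\rev$ is an involution, so $\omega^{\mathrm r}$ determines $\omega$.
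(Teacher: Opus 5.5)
Your proposal is correct and follows the same route as the paper. The paper gives no separate proof and treats the corollary as immediate from \cref{thm:main}, by restricting $\Phi$ to the fibers of $(\DT,\Pword)$; your surjectivity step, which uses that $\rev$ is an involution, is exactly the point that needs checking. For the joint $(\DT,\Pin)$ statement, you could skip decomposing over orderings of $P$: $\Pin$ is the letter set of $\Pword$ and $\rev$ preserves letter sets, so $\Phi$ itself preserves $(\DT,\Pin)$. Your version is equally valid.
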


\begin{example}\label{ex:preview}
The bijection sends $15243$ to $34152$ through the intermediate word
$34251$:
\[
  15243\xrightarrow{\T}34251
  \xrightarrow{\text{exchange }2,1}34152.
\]
Both the source and the image have descent-top set $\{4,5\}$, while
their pinnacle words are $54$ and $45$, respectively. We return to this
example after defining the exchanges and give its inverse at the end of
\cref{sec:bijection}.
\end{example}

The proof has three parts. In \cref{sec:preliminaries}, we introduce the
tree involution and the endpoint exchanges, and record the statistics
they preserve. In \cref{sec:characterizations}, we characterize the two
avoidance classes by the absence of exchanges in the corresponding
directions. In \cref{sec:normal-forms}, we prove uniqueness of the two
terminal permutations in each component and assemble the bijection.
\Cref{sec:consequences} gives the symmetry and Dumont consequences.

\section{Trees and endpoint exchanges}\label{sec:preliminaries}

All words below have distinct letters in a fixed finite totally ordered
set. The pattern definitions depend only on relative order, so the
arguments apply to recursive subwords without standardization.

\subsection{Run endpoints and a vincular pattern}\label{sec:threshold}

Let $\pi=\pi_1\cdots\pi_n$ be a word with distinct letters.

\begin{definition}\label{def:endpoint-threshold}
A position $r$ is a \emph{run endpoint} if
\[
  r=n \qquad\text{or}\qquad \pi_r<\pi_{r+1}.
\]
Thus $\pi_r$ is the final entry of a maximal consecutive decreasing run; equivalently, it is not a descent top. Write the run-endpoint positions as
\[
  r_1<r_2<\cdots<r_m,
  \qquad e_i=\pi_{r_i}.
\]
For every position $r$, we use the threshold
\begin{equation}\label{eq:threshold}
 F_\pi(r)=\max\{\pi_p:p<q<r,\ \pi_p<\pi_q\},
\end{equation}
with $F_\pi(r)=-\infty$ if no such pair exists. At a run endpoint, set
\[
  h_i=F_\pi(r_i).
\]
When convenient, we also write $h(e_i)=h_i$.
\end{definition}

The threshold is monotone along the endpoints:
\begin{equation}\label{eq:h-monotone}
  h_1\le h_2\le\cdots\le h_m.
\end{equation}
Whenever $h_i\ne-\infty$, there are positions $p<q<r_i$ such that
\begin{equation}\label{eq:threshold-witness}
  \pi_p=h_i<\pi_q.
\end{equation}

Let
\[
  \nu=3\text{-}4\text{-}21
\]
be the vincular pattern in which the last two entries must be adjacent. Thus $\pi$ contains $\nu$ if there are $p<q<s<n$ such that
\[
  \pi_{s+1}<\pi_s<\pi_p<\pi_q.
\]

\begin{lemma}[Threshold characterization]\label{lem:nu-threshold}
The word $\pi$ contains $\nu$ if and only if there is a descent-top position $s$ such that
\[
  F_\pi(s)>\pi_s.
\]
\end{lemma}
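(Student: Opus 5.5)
The plan is to prove the two implications directly from the definition of $F_\pi$ in \eqref{eq:threshold}, matching the witness pair $p<q$ of the threshold with the ``$3$'' and ``$4$'' of an occurrence of $\nu$. A descent-top position $s$ means $s<n$ and $\pi_s>\pi_{s+1}$, so the pair $(s,s+1)$ is exactly the adjacent ``$21$'' part of $\nu$.

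For the forward direction, suppose $\pi$ contains $\nu$ at positions $p<q<s<n$ with $\pi_{s+1}<\pi_s<\pi_p<\pi_q$. Then $s$ is a descent-top position, since $\pi_s>\pi_{s+1}$. The pair $(p,q)$ satisfies $p<q<s$ and $\pi_p<\pi_q$, so it is admissible in \eqref{eq:threshold} at $r=s$. Hence $F_\pi(s)\ge\pi_p>\pi_s$.

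For the converse, let $s$ be a descent-top position with $F_\pi(s)>\pi_s$. Then $F_\pi(s)\neq-\infty$, so the maximum in \eqref{eq:threshold} is attained by some pair $p<q<s$ with $\pi_p<\pi_q$ and $\pi_p=F_\pi(s)$. This is the same reasoning as \eqref{eq:threshold-witness}, applied at an arbitrary position rather than at a run endpoint. We now have $\pi_{s+1}<\pi_s<\pi_p<\pi_q$. Since $s$ is a descent top, $s<n$ and $s+1$ exists. Therefore $p<q<s<n$ gives an occurrence of $\nu$.

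There is no real obstacle here. The only points to check are that $s<n$ is guaranteed by $s$ being a descent-top position, and that the maximum in $F_\pi(s)$ is actually attained by a pair, which holds because the set of pairs is finite and nonempty once $F_\pi(s)>-\infty$. Distinctness of letters makes all the inequalities strict, as the pattern requires.
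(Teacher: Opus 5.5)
Your proposal is correct and follows the paper's proof essentially verbatim: in the forward direction the increasing pair $(p,q)$ of the occurrence is a candidate in $F_\pi(s)$, and in the converse a pair attaining the threshold, together with the adjacent descent at $s,s+1$, gives an occurrence of $\nu$. Your added remarks, that $s<n$ holds automatically and that the maximum is attained, are valid details the paper leaves implicit.
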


\begin{proof}
If $\pi_p,\pi_q,\pi_s,\pi_{s+1}$ form an occurrence of $\nu$, then $\pi_p<\pi_q$ is an increasing pair before $s$, and $\pi_p>\pi_s$. Hence
\[
  F_\pi(s)\ge \pi_p>\pi_s.
\]
Conversely, suppose that $\pi_s>\pi_{s+1}$ and $F_\pi(s)>\pi_s$. Choose an increasing pair $F_\pi(s)<y$ realizing the threshold. Then
\[
  \pi_{s+1}<\pi_s<F_\pi(s)<y,
\]
so $F_\pi(s),y,\pi_s,\pi_{s+1}$ form an occurrence of $\nu$.
\end{proof}

\subsection{The Cartesian-tree involution}\label{sec:tree}

The \emph{maximum Cartesian tree} of a word $\pi$ is defined recursively
(cf.\ \cite{Vuillemin1980}). If $M$ is the largest letter and
\[
  \pi=A\,M\,B,
\]
then the root is $M$, its left subtree is the maximum Cartesian tree of $A$, and its right subtree is the maximum Cartesian tree of $B$. Inorder traversal recovers $\pi$.

Set $\T(\emptyword)=\emptyword$. For a nonempty word, define $\T$ by exchanging the left and right subtrees at every node having two nonempty subtrees, while preserving the direction of the unique child at a unary node. Equivalently,
\begin{equation}\label{eq:T-recursion}
\T(A M B)=
\begin{cases}
 M\T(B), & A=\emptyword,\\
 \T(A)M, & B=\emptyword,\\
 \T(B)M\T(A), & A,B\ne\emptyword.
\end{cases}
\end{equation}

On $132$-avoiding permutations, this recursive rule is the
descent-top-preserving bijection to $231$-avoiding permutations used by
Burstein~\cite[Theorem~1, equation~(1)]{Burstein2025}. Here we apply the
same rule to arbitrary words with distinct letters. For the example in
\cref{ex:preview}, it gives
\[
  \T(15243)=\T(243)\,5\,\T(1)=34251.
\]
The two binary nodes are labeled $5$ and $4$; exchanging their subtrees
gives the trees in \cref{fig:tree-example}.

\begin{figure}[htbp]
\centering
\begin{tikzpicture}[x=1cm,y=1cm,every node/.style={inner sep=2pt}]
  \node (a5) at (0,0) {$5$};
  \node (a1) at (-.8,-.65) {$1$};
  \node (a4) at (.8,-.65) {$4$};
  \node (a2) at (.3,-1.3) {$2$};
  \node (a3) at (1.3,-1.3) {$3$};
  \draw (a5)--(a1) (a5)--(a4) (a4)--(a2) (a4)--(a3);
  \node at (0,-1.75) {$15243$};
  \draw[->] (2,-.65)--node[above] {$\T$} (3,-.65);
  \node (b5) at (5,0) {$5$};
  \node (b4) at (4.2,-.65) {$4$};
  \node (b1) at (5.8,-.65) {$1$};
  \node (b3) at (3.7,-1.3) {$3$};
  \node (b2) at (4.7,-1.3) {$2$};
  \draw (b5)--(b4) (b5)--(b1) (b4)--(b3) (b4)--(b2);
  \node at (5,-1.75) {$34251$};
\end{tikzpicture}
\caption{The tree involution in \cref{ex:preview}. Inorder traversal
reads the words beneath the trees.}
\label{fig:tree-example}
\end{figure}

\begin{lemma}\label{lem:dt-right-subtree}
A letter $v$ is a descent top if and only if its Cartesian-tree node has a nonempty right subtree.
\end{lemma}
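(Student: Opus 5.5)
We argue from the recursive definition of the maximum Cartesian tree and its inorder traversal. Fix a letter $v=\pi_i$. In the tree of $\pi$, the subtree rooted at $v$ has as its inorder word a maximal contiguous factor $\pi_a\cdots\pi_b$ of $\pi$ containing position $i$. Every letter of this factor is at most $v$, and the factor is maximal with respect to that property: either $b=n$ or $\pi_{b+1}>v$. We first check this by induction along the recursion $\pi=AMB$. The root's subtree is the whole word. The subtree of any node in $A$ or $B$ is, by induction, a maximal factor of $A$ or $B$ all of whose letters are at most that node. Such a factor stays maximal in $\pi$, because the only new neighbour across the boundary is $M$, and $M$ is larger than every letter.

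Given this description, the right subtree of $v$ is precisely the inorder word $\pi_{i+1}\cdots\pi_b$. It is nonempty if and only if $b>i$, that is, if and only if $i<n$ and $\pi_{i+1}<v$, so that position $i+1$ still lies in the maximal factor of letters at most $v$. This is exactly the condition that $v$ is a descent top.

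For the converse direction, suppose $i<n$ and $\pi_{i+1}<v$. Then $\pi_{i+1}$ belongs to the factor, hence lies in the subtree of $v$, and it sits to the right of $v$ in inorder, so it lies in the right subtree. If instead $i=n$, or $\pi_{i+1}>v$, then nothing to the right of $v$ can belong to its subtree, so the right subtree is empty.

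The only step needing care is the inductive identification of subtrees with maximal factors bounded by the root value. Alternatively, one can induct directly on $\pi=AMB$. For the root $M$, the right subtree is $B$, and $B$ is nonempty exactly when $M$ is followed by a smaller letter. For a letter $v$ in $A$, its right subtree inside the tree of $A$ is nonempty if and only if $v$ is a descent top of $A$. The last letter of $A$ is followed by $M>v$ in $\pi$, so it is not a descent top in $\pi$ and also not a descent top in $A$, and the two notions agree. For letters in $B$ the argument is the same, since the position before $B$ is irrelevant to being a descent top. This second route avoids the factor lemma altogether, and I would present that version.
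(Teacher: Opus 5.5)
Your argument is correct, but it proceeds differently from the paper.

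The paper gives a direct, non-inductive proof using the standard description of inorder successors. If $v$ has a nonempty right subtree, the next entry is the leftmost node of that subtree, and it is smaller by the heap property. If not, the next entry is the lowest ancestor whose left subtree contains $v$, which is larger, or else $v$ is the final entry.

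Your preferred route instead inducts on $\pi=AMB$. You check that both properties, having a nonempty right subtree and being a descent top, are unchanged when a letter of $A$ or $B$ is viewed inside $\pi$. The only boundary case is the last letter of $A$, which is a descent top neither in $A$ nor in $\pi$. This uses nothing beyond the recursive definition of the tree, and it matches the inductive style of later arguments such as \cref{lem:subword-lifting}, at the cost of a little case checking. The paper's version is shorter but relies on the inorder-successor facts, which it reuses in mirror form for \cref{lem:pinnacle-binary}.

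Your first route identifies the subtree of $v$ with the maximal factor of letters at most $v$ around $v$. This proves more than the lemma needs. It is exactly the fact behind the paper's later remark that the least common ancestor of the entries at positions $p<q$ is the maximum of $\pi_p\cdots\pi_q$. It is worth keeping even if you present the inductive version.
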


\begin{proof}
If the node of $v$ has a right subtree, then the entry immediately following $v$ in inorder traversal is the leftmost node of that right subtree. The maximum-heap property makes that entry smaller than $v$, so $v$ is a descent top.

Suppose that the node of $v$ has no right subtree. If $v$ is not the last inorder entry, its inorder successor is the lowest ancestor whose left subtree contains $v$. That ancestor is larger than $v$, so $v$ is followed by a larger entry. If no such ancestor exists, then $v$ is the final entry. In either case, $v$ is not a descent top.
\end{proof}

In particular,
\begin{equation}\label{eq:endpoint-no-right}
  v\text{ is a run endpoint}
  \quad\Longleftrightarrow\quad
  v\text{ has no right subtree}.
\end{equation}

\begin{lemma}\label{lem:T-involution-DT}
The transformation $\T$ is an involution and preserves descent-top values:
\[
  \T^2=\id,
  \qquad
  \DT(\T(\pi))=\DT(\pi).
\]
\end{lemma}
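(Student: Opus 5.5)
We argue by strong induction on the length of the word, using the recursion \eqref{eq:T-recursion} together with the fact that $\T$ does not change the shape of the Cartesian tree beyond the prescribed swaps. The first step is a structural observation. If $\pi=AMB$ with $M$ maximal, then $M$ is also the largest letter of $\T(\pi)$. In each of the three cases of \eqref{eq:T-recursion}, the word on the left of $M$ in $\T(\pi)$ is $\T$ of one of $A,B$, and the word on the right is $\T$ of the other, or is empty. Moreover, $\T$ preserves the letter set of a word; this is immediate by induction. Hence the maximum Cartesian tree of $\T(\pi)$ has root $M$ and subtrees equal to the Cartesian trees of these images. This identifies $\T$ with the tree operation described in words: at each binary node the two subtrees are exchanged, and at each unary node the child stays on its side.

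For the involution, we check the three cases. If $A=\emptyword$, then $\T(\pi)=M\T(B)$, which is again of the first form, so $\T^2(\pi)=M\T(\T(B))=MB$ by induction. The case $B=\emptyword$ is symmetric. If $A,B\neq\emptyword$, then $\T(\pi)=\T(B)\,M\,\T(A)$ has two nonempty sides, since $\T$ preserves lengths. Therefore
\[
  \T^2(\pi)=\T(\T(A))\,M\,\T(\T(B))=AMB
\]
by the induction hypothesis.

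For descent tops, we use \cref{lem:dt-right-subtree}: $v\in\DT(\pi)$ exactly when the node of $v$ has a nonempty right subtree. It therefore suffices to show that each node has a nonempty right subtree in $\T(\pi)$ if and only if it has one in $\pi$. The swap rule gives this directly. At a binary node, both subtrees are nonempty before and after the exchange. At a unary node, the single child stays on its original side. At a leaf, nothing changes. Because the tree of $\T(\pi)$ is obtained node by node in this way, with the same labels, each letter has a nonempty right subtree in one tree exactly when it does in the other. Hence $\DT(\T(\pi))=\DT(\pi)$. Alternatively, one can argue recursively. For the root $M$, the right side is nonempty in $\T(\pi)$ exactly when $B\neq\emptyword$. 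The other letters lie in subwords whose Cartesian trees are subtrees, so induction applies to them.

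The only point requiring care is the identification of the Cartesian tree of $\T(\pi)$ with the swapped tree, that is, the observation that $M$ remains the maximum and that the subtrees are the images of the original subtrees. This rests on $\T$ preserving letter sets. Once that is in place, both claims reduce to routine case checks.
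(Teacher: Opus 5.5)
Your proof is correct and follows the paper's route: you get $\T^2=\id$ because binary swaps are undone and unary child directions are never changed (checked via the recursion by induction), and you get $\DT(\T(\pi))=\DT(\pi)$ from \cref{lem:dt-right-subtree}, since each node has a nonempty right subtree after applying $\T$ exactly when it had one before. Your explicit check that the Cartesian tree of $\T(\pi)$ is the swapped tree, using that $\T$ preserves letter sets, fills in a step the paper leaves implicit.
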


\begin{proof}
A binary node is exchanged twice, while the direction of a unary node is never changed, so $\T^2=\id$. A binary node has a right subtree both before and after the exchange. A unary node keeps its child direction. Thus each individual letter retains the property of having a right subtree, and the result follows from \cref{lem:dt-right-subtree}.
\end{proof}

\begin{lemma}\label{lem:pinnacle-binary}
A letter $v$ is a pinnacle if and only if its node in the maximum Cartesian tree is binary.
\end{lemma}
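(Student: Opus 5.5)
The plan is to split the pinnacle condition into its two halves and read each one off the tree. Pinnacle means $\pi_{i-1}<\pi_i$ and $\pi_i>\pi_{i+1}$, with $2\le i\le n-1$. By \cref{lem:dt-right-subtree}, the condition that $v=\pi_i$ is a descent top (which forces $i\le n-1$) is equivalent to its node having a nonempty right subtree. It then suffices to prove the mirror statement: $v$ is an \emph{ascent top}, meaning $i\ge2$ and $\pi_{i-1}<\pi_i$, if and only if its node has a nonempty left subtree. Combining the two equivalences shows that $v$ is a pinnacle exactly when both subtrees are nonempty, that is, when its node is binary.

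For the mirror statement, the cleanest route is symmetry. Reversing the word $\pi$ gives a word whose maximum Cartesian tree is the mirror image of the tree of $\pi$. This follows by a one-line induction on the recursive definition: writing $\pi=A\,M\,B$, the reversal is $\rev(B)\,M\,\rev(A)$. A left subtree of $v$ in the tree of $\pi$ therefore becomes a right subtree in the tree of $\rev(\pi)$. Likewise, $\pi_{i-1}<\pi_i$ with $i\ge2$ holds exactly when $v$ is a descent top of $\rev(\pi)$. Applying \cref{lem:dt-right-subtree} to $\rev(\pi)$ gives the claim.

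Alternatively, one can repeat the proof of \cref{lem:dt-right-subtree} with left and right interchanged. If $v$ has a left subtree, its inorder predecessor is the rightmost node of that subtree, and the heap property makes it smaller than $v$. If $v$ has no left subtree, its predecessor, if it exists, is the lowest ancestor whose right subtree contains $v$. That ancestor is larger than $v$. If no predecessor exists, then $v$ is the first entry.

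No step is a real obstacle. The only point needing care is the boundary positions. The endpoint conditions $i\ge2$ and $i\le n-1$ are absorbed automatically: a first entry has no left subtree, and a last entry has no right subtree. So the definition of pinnacle, which requires $2\le i\le n-1$, matches the binary-node condition exactly.
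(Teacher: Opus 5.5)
Your proposal is correct and follows the paper's proof. The paper likewise splits the pinnacle condition into the right-neighbor half, handled by \cref{lem:dt-right-subtree}, and the left-neighbor half, which it proves by exactly your ``alternative'' inorder-predecessor argument; your reversal-symmetry route is an equally valid shortcut for that half.
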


\begin{proof}
A node has a nonempty left subtree if and only if its entry has an inorder
predecessor that is smaller. If the left subtree is nonempty, the
predecessor is its rightmost node and is smaller by the maximum-heap
property. If the left subtree is empty, then either the node is the first
inorder entry or its predecessor is the lowest ancestor whose right subtree
contains the node; in the latter case the predecessor is larger. By
\cref{lem:dt-right-subtree}, a node has a nonempty right subtree if and only
if its entry is followed by a smaller entry. Hence an entry has two smaller
neighbors precisely when its node has two nonempty subtrees.
\end{proof}

\begin{lemma}[LCA order criterion]\label{lem:lca-order}
Suppose that $u$ occurs to the left of $v$ in $\pi$. Their order is reversed in $\T(\pi)$ if and only if their least common ancestor in the maximum Cartesian tree is binary.
\end{lemma}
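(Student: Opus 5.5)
I will argue by induction on the length of $\pi$, following the recursion \eqref{eq:T-recursion}. The key structural fact is that $\T$ maps the maximum Cartesian tree of $\pi$ to a tree with the same node set. Each node keeps its parent, and only the left/right assignment of children changes. Concretely, I will first check from \eqref{eq:T-recursion} that the maximum Cartesian tree of $\T(AMB)$ has root $M$, and that its subtrees are the trees of $\T(A)$ and $\T(B)$. These are placed on swapped sides when both are nonempty and on the original side otherwise. This holds because $M$ is still the largest letter and the factorization around $M$ is exactly the displayed one. By induction, the set of letters in the subtree of any node is unchanged by $\T$, and so is the ancestor relation. In particular, the least common ancestor $w$ of $u$ and $v$ is the same node before and after, and its arity (binary or unary) is unchanged.

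With that in hand, I take $u$ to the left of $v$ in $\pi$ and locate them relative to $w$. There are three cases. In the first, one of them is $w$ itself. In the second, $u$ and $v$ lie in the two different subtrees of $w$. They cannot lie in the same subtree of $w$, since $w$ is their lowest common ancestor. In the second case $w$ is binary, $u$ lies in the left subtree and $v$ in the right. After $\T$, the subtree containing $v$'s letters is placed to the left of $w$ and the one containing $u$'s to the right. Since inorder traversal lists the left subtree, then $w$, then the right subtree, $v$ now precedes $u$, and the order is reversed, as claimed.

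In the first case, say $u=w$, so $v$ lies in the right subtree of $u$. If $w$ is unary, that right subtree is its only child and stays on the right, so $v$ still follows $u$. If $w$ is binary, the right subtree moves to the left, so $v$ now precedes $u$. The case $v=w$, with $u$ in the left subtree, is symmetric. In both cases, reversal happens exactly when $w$ is binary.

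There is one subtlety in the first case. The unary/binary dichotomy concerns $w$'s full set of children, not only the child containing the other letter. When $w$ is binary, the subtree containing the descendant is still swapped, so the criterion holds. I expect the only real obstacle to be the structural claim in the first step: the Cartesian tree of $\T(\pi)$ is obtained by reflecting children at binary nodes. This needs a short induction using the heap property of the recursively built word, and once it is verified, the case analysis above is immediate.
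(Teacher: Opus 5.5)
Your proof is correct and follows essentially the same route as the paper. Both split into the cases where one of $u,v$ is the least common ancestor or the two lie in opposite subtrees of it, and in both the order reverses exactly when that ancestor is binary. Your preliminary induction shows that the maximum Cartesian tree of $\T(\pi)$ is the reflected tree with the same ancestor relation; this makes explicit what the paper takes from the tree-level definition of $\T$, namely that exchanges at other nodes do not affect the relative order of $u$ and $v$.
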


\begin{proof}
Let $L$ be the least common ancestor. If $L$ is binary, then either $u$ and $v$ lie on opposite sides of $L$, or one of them equals $L$ and the other lies on one side. Exchanging the two subtrees of $L$ reverses their inorder order.

If $L$ is not binary, then $L$ must be one of $u,v$; otherwise both nodes
would lie in its unique subtree, contradicting the minimality of $L$. Since
the direction of the unique child of $L$ is unchanged, the other node stays
on the same side of $L$ in inorder traversal. Exchanges strictly below $L$
cannot change the order of either node relative to $L$. Thus the order of
$u$ and $v$ is unchanged.
\end{proof}

\begin{proposition}\label{prop:T-pinnacles}
For every word $\pi$ with distinct letters,
\[
  \Pin(\T(\pi))=\Pin(\pi),
  \qquad
  \Pword(\T(\pi))=\rev\bigl(\Pword(\pi)\bigr).
\]
\end{proposition}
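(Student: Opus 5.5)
We argue entirely through the tree description, using \cref{lem:pinnacle-binary} for the set equality and \cref{lem:lca-order} for the order reversal.

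\textbf{Pinnacle set.} By \cref{lem:pinnacle-binary}, $\Pin(\pi)$ is the set of letters whose node in the maximum Cartesian tree of $\pi$ is binary. We first check that the tree of $\T(\pi)$ is the tree of $\pi$ with the subtrees of every binary node swapped and each unary child kept on its side. We do this by induction on the length, using \eqref{eq:T-recursion}. The maximum of $\T(AMB)$ is still $M$. What lies to the left and right of $M$ is $\T(B)$ and $\T(A)$ when $A,B\ne\emptyword$, and otherwise the single nonempty part, transformed, on its original side. By induction, the subwords $\T(A)$ and $\T(B)$ have the transformed subtrees. In particular, the underlying unordered rooted tree is unchanged, and so is the number of children of each node. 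Hence the binary nodes of $\T(\pi)$ are exactly those of $\pi$, and $\Pin(\T(\pi))=\Pin(\pi)$ by \cref{lem:pinnacle-binary} applied to both words.

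\textbf{Order reversal.} Take two pinnacles $u,v$ with $u$ to the left of $v$ in $\pi$, and let $L$ be their least common ancestor. We show that $L$ is binary, considering two cases.
\begin{itemize}
\item If $L\notin\{u,v\}$, then $u$ and $v$ lie in different child subtrees of $L$. Hence $L$ has two children and is binary.
\item If $L$ is one of them, then $L$ is itself a pinnacle, and hence binary by \cref{lem:pinnacle-binary}.
\end{itemize}
In either case, \cref{lem:lca-order} shows that $u$ and $v$ appear in the opposite order in $\T(\pi)$. Since every pair of pinnacles is reversed, and the pinnacle sets coincide, the left-to-right reading of $\Pin(\T(\pi))$ is the reverse of $\Pword(\pi)$. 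That is, $\Pword(\T(\pi))=\rev(\Pword(\pi))$.

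The only step needing care is the first one: that $\T$ acts on trees as described, so the tree of $\T(\pi)$ has the same binary nodes. This is already implicit in the definition and in the proof of \cref{lem:T-involution-DT}. If a direct argument is preferred, the induction on \eqref{eq:T-recursion} sketched above settles it, since $\T$ of a word has the same letters and hence the same maximum $M$. Everything else follows immediately from \cref{lem:lca-order}.
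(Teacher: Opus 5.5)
Your proposal is correct and follows the paper's proof. Binary nodes are unchanged by $\T$, which gives the set equality via \cref{lem:pinnacle-binary}, and any two binary nodes have a binary least common ancestor, so \cref{lem:lca-order} reverses every pair of pinnacles. Your extra induction showing that the tree of $\T(\pi)$ is the subtree-swapped tree is a harmless elaboration of what the paper takes as evident from the definition.
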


\begin{proof}
By \cref{lem:pinnacle-binary}, the pinnacle values are precisely the labels of binary nodes. The transformation $\T$ does not change which nodes are binary, so it preserves the pinnacle set.

Let $u$ and $v$ be two distinct binary nodes. Their least common ancestor is binary: if it equals one of $u,v$, that node is binary by assumption; otherwise $u$ and $v$ lie in different subtrees of their least common ancestor. Hence \cref{lem:lca-order} reverses the order of every pair of pinnacle values. It follows that the entire left-to-right pinnacle word is reversed.
\end{proof}

The least common ancestor of the entries at positions $p<q$ is the node labeled by the maximum entry of the interval $\pi_p\cdots\pi_q$.

\begin{corollary}\label{cor:endpoint-nonbinary-lca}
Let run-endpoint values $a$ and $b$ occur in this order. If their least common ancestor is not binary, then it is the node $b$, and $b$ is the largest entry in the interval from $a$ to $b$.
\end{corollary}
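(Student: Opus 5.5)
The plan is to combine the proof of \cref{lem:lca-order} with the characterization \eqref{eq:endpoint-no-right} of run endpoints as nodes without a right subtree. Let $L$ be the least common ancestor of $a$ and $b$, and suppose $L$ is not binary. The argument in \cref{lem:lca-order} shows that $L$ must be one of $a,b$. Otherwise $a$ and $b$ would lie in the same (unique) subtree of $L$, and some lower node would then be a common ancestor, contradicting minimality.

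Next I rule out $L=a$. If $L=a$, then $b$ is a proper descendant of $a$. Since $b$ occurs to the right of $a$ in inorder traversal, $b$ lies in the right subtree of $a$. But $a$ is a run endpoint, so by \eqref{eq:endpoint-no-right} it has no right subtree. This is a contradiction, so $L=b$. (In this case $a$ then lies in the left subtree of $b$, which is consistent with $b$ having no right subtree.)

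Finally, I use the remark preceding the corollary. The least common ancestor of the entries at positions $p<q$ is the node labeled by the maximum of $\pi_p\cdots\pi_q$. Applying this with $p$ and $q$ the positions of $a$ and $b$, the label $b$ is the maximum of the interval from $a$ to $b$.

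There is no real obstacle; the only point needing care is the step excluding $L=a$, which uses the run-endpoint hypothesis on $a$. Note that the hypothesis on $b$ is not actually needed.
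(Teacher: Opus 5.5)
Your proposal is correct and follows the paper's argument exactly: a nonbinary least common ancestor must be $a$ or $b$, it cannot be $a$ because \eqref{eq:endpoint-no-right} says $a$ has no right subtree, and so it is $b$, the interval maximum. Your observation that only the run-endpoint hypothesis on $a$ is used is also accurate.
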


\begin{proof}
A nonbinary least common ancestor must be one of the two nodes. It cannot be $a$: because $b$ lies to the right of $a$, an ancestral node $a$ would have a nonempty right subtree, contradicting \eqref{eq:endpoint-no-right}. Hence it is $b$, which is therefore the interval maximum.
\end{proof}

\subsection{Endpoint exchanges}\label{sec:exchanges}

Consider two run-endpoint positions $r_i<r_j$. Exchange the values at those positions and leave every other position fixed.

\begin{definition}\label{def:admissible}
Such an exchange is \emph{admissible} if it preserves the descent-top value set.
\end{definition}

\begin{lemma}\label{lem:endpoint-positions-preserved}
An admissible exchange preserves all descent positions and all run-endpoint positions.
\end{lemma}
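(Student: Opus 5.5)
We must show that an admissible exchange (swapping the values at run-endpoint positions $r_i<r_j$ while preserving $\DT$) keeps every descent position and every run-endpoint position. Write $\pi'$ for the result, $a=\pi_{r_i}$ and $b=\pi_{r_j}$. Since the run endpoints are exactly the positions that are not descent positions (together with $n$, which is always a run endpoint), it suffices to show that the descent positions are unchanged.

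The key observation is a counting argument. The descent-top values of a word are the values sitting at descent positions, so $|\DT(\pi)|$ equals the number of descent positions. Only positions adjacent to $r_i$ or $r_j$ can change status, namely $r_i-1,r_i,r_j-1,r_j$. The two endpoint values $a,b$ are not descent tops in $\pi$, since they sit at run endpoints. By admissibility, $\DT(\pi')=\DT(\pi)$, so $a,b\notin\DT(\pi')$. Therefore positions $r_i$ and $r_j$, which now hold $b$ and $a$, are not descent positions in $\pi'$; that is, they remain run endpoints.

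Next, a position $p\notin\{r_i,r_j\}$ holds the same value in $\pi$ and $\pi'$. That value is a descent top in $\pi$ if and only if $p$ is a descent position in $\pi$, because a value determines its position. Likewise, it is a descent top in $\pi'$ if and only if $p$ is a descent position in $\pi'$. Since $\DT(\pi')=\DT(\pi)$ and the value at $p$ is unchanged, $p$ is a descent position in $\pi$ if and only if it is one in $\pi'$. Combined with the previous paragraph, every position has the same descent status in $\pi$ and $\pi'$. Hence the descent positions agree, and so do the run-endpoint positions, which are their complement.

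The only subtle point is the identification ``value $v$ is a descent top $\Leftrightarrow$ the position of $v$ is a descent position.'' This is immediate because letters are distinct. No case analysis on the neighbors $r_i-1$ or $r_j-1$ is needed, since those positions are handled uniformly by the fixed-value argument.
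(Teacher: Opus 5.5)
Your proof is correct and matches the paper's: the exchanged values are not descent tops before the exchange, so admissibility keeps them from becoming descent tops, and every unmoved value keeps its status because the letters are distinct. The ``counting argument'' you announce, and the remark that only positions $r_i-1,r_i,r_j-1,r_j$ can change, are never used and can be dropped.
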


\begin{proof}
The two exchanged values are not descent tops before the exchange. If either became a descent top afterward, the descent-top value set would gain that value. Every unmoved value must also retain its descent-top status, since the values are distinct and the descent-top value set is unchanged. Consequently, every position retains its descent or non-descent status, and hence the set of run-endpoint positions is unchanged.
\end{proof}

\begin{lemma}\label{lem:exchange-pinnacles}
An admissible endpoint exchange preserves the peak positions, the pinnacle values at those positions, and hence the pinnacle word.
\end{lemma}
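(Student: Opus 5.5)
By \cref{lem:endpoint-positions-preserved}, an admissible exchange preserves every descent position. I will show that the peak positions are determined by the descent positions, and that the values at peak positions are never moved by an endpoint exchange. Together these give the statement.

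\emph{Peak positions.} A position $i$ with $2\le i\le n-1$ is a peak position exactly when $i-1$ is not a descent position and $i$ is a descent position. Indeed, $\pi_{i-1}<\pi_i$ says $i-1$ is an ascent, and $\pi_i>\pi_{i+1}$ says $i$ is a descent. So the set of peak positions is a function of the descent-position set alone. Since an admissible exchange preserves descent positions (\cref{lem:endpoint-positions-preserved}), it preserves the peak positions.

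\emph{Values at peak positions.} A peak position $i$ satisfies $\pi_i>\pi_{i+1}$, so $\pi_i$ is a descent top. Hence $i$ is not a run endpoint, by \cref{def:endpoint-threshold}, since a run endpoint either is the last position or is followed by a larger entry. An endpoint exchange changes only the values at two run-endpoint positions and fixes every other position. Therefore the value at each peak position is the same before and after the exchange.

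\emph{Pinnacle word.} The pinnacle word reads the values at the peak positions from left to right. Both the positions and the values there are unchanged, so $\Pword$ is unchanged, and consequently so is $\Pin$.

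I do not expect a real obstacle here. The only point needing care is that peak positions are defined through adjacent comparisons, which \cref{lem:endpoint-positions-preserved} controls. The observation that peak positions are never run endpoints is what keeps their values from moving.
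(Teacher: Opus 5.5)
Your proposal is correct and follows the paper's proof essentially step for step. You use the same argument: peak positions are determined by the descent set, which \cref{lem:endpoint-positions-preserved} preserves, and peak positions are descent positions and hence never among the exchanged run-endpoint positions.
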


\begin{proof}
For $2\le i\le n-1$, the position $i$ is a peak position exactly when $i$ is a descent and $i-1$ is not a descent. Thus the peak positions are determined by the descent set.
By \cref{lem:endpoint-positions-preserved}, an admissible exchange preserves every descent position, so it preserves every peak position. The exchanged positions are run endpoints and therefore are not descent positions. In particular, no value occupying a peak position is moved. Thus the pinnacle word is unchanged.
\end{proof}

\begin{lemma}[Local admissibility criterion]\label{lem:local-admissibility}
Let $p<q$ be run-endpoint positions, with
\[
  a=\pi_p,\qquad b=\pi_q.
\]
If $q=p+1$, then exchanging $a$ and $b$ is not admissible. Suppose that $q\ge p+2$.

\begin{enumerate}[label=\textup{(\roman*)}]
\item If $a>b$, the exchange is admissible if and only if
\[
  q=n\quad\text{or}\quad a<\pi_{q+1},
\]
and, whenever the indicated predecessors exist,
\[
  \pi_{p-1}\notin(b,a),
  \qquad
  \pi_{q-1}\notin(b,a).
\]
\item If $a<b$, the exchange is admissible if and only if
\[
  b<\pi_{p+1},
\]
and, whenever the indicated predecessors exist,
\[
  \pi_{p-1}\notin(a,b),
  \qquad
  \pi_{q-1}\notin(a,b).
\]
\end{enumerate}
\end{lemma}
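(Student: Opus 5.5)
The plan is a purely local check. Swapping the values at positions $p$ and $q$ can only change the descent/non-descent status of the adjacent pairs $(p-1,p)$, $(p,p+1)$, $(q-1,q)$ and $(q,q+1)$. Every other adjacent comparison involves two unmoved values. If the exchange is admissible, \cref{lem:endpoint-positions-preserved} shows that every descent position is preserved. Conversely, if every position keeps its descent status, then the descent-top set is unchanged. The reason is that $a$ and $b$ stay at non-descent positions, and every other value stays at its position. So admissibility is equivalent to all four local comparisons keeping their direction, and I will check them one by one.

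First the degenerate case $q=p+1$. Since $p$ is a run endpoint, $a<b$. After the swap $b$ sits at position $p$, followed by $a<b$, so position $p$ becomes a descent and $b$ becomes a descent top. Hence the exchange is not admissible. From now on $q\ge p+2$, so the four pairs involve distinct neighbours $\pi_{p-1},\pi_{p+1},\pi_{q-1},\pi_{q+1}$, all of them unmoved.

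Case (i), $a>b$. At position $p$ we have $a<\pi_{p+1}$, and the new value $b<a$ still satisfies $b<\pi_{p+1}$, so this pair is automatic. At position $q$ we need the new value $a$ to remain a non-descent: either $q=n$ or $a<\pi_{q+1}$. For the predecessor pair $(p-1,p)$, the position $p-1$ is a descent iff $\pi_{p-1}>a$. After the swap it is a descent iff $\pi_{p-1}>b$. These agree iff $\pi_{p-1}\notin(b,a)$. The same reasoning at $(q-1,q)$ gives $\pi_{q-1}\notin(b,a)$.

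Case (ii), $a<b$. Now position $q$ is automatic: $b<\pi_{q+1}$ (or $q=n$) gives $a<\pi_{q+1}$. Position $p$ needs $b<\pi_{p+1}$, which exists since $p<q$. The two predecessor conditions are the symmetric interval conditions $\pi_{p-1},\pi_{q-1}\notin(a,b)$.

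The only subtle point is the reduction in the first paragraph. I must make sure that descent-top values are preserved exactly when descent positions are. This uses distinctness of the values, and the fact that the moved values land only at positions whose status we have verified. Given \cref{lem:endpoint-positions-preserved} for one direction, that step is routine. The rest is bookkeeping of the four comparisons.
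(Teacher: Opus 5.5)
Your proof is correct and is essentially the paper's argument. You dispose of $q=p+1$ via the new descent top $b$, then observe that only the comparisons at positions $p-1,p,q-1,q$ can change and read off the conditions; your explicit reduction (admissibility $\Leftrightarrow$ all descent positions preserved, via \cref{lem:endpoint-positions-preserved}) just spells out what the paper uses implicitly. One harmless slip: when $q=p+2$ the neighbours $\pi_{p+1}$ and $\pi_{q-1}$ coincide, so they are not distinct, but the four comparisons are still different and each involves only one moved entry, which is all you need.
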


\begin{proof}
If $q=p+1$, then $a<b$ because $p$ is a run endpoint. Since $q$ is also a run endpoint and the letters are distinct, $b$ is not a descent-top value before the exchange. After the exchange, $b$ is immediately followed by $a$, so $b$ becomes a new descent top.

Now assume that $q\ge p+2$. Only the four adjacent comparisons
\[
 (\pi_{p-1},a),\quad (a,\pi_{p+1}),\quad
 (\pi_{q-1},b),\quad (b,\pi_{q+1})
\]
can change, with nonexistent boundary terms omitted.

Suppose first that $a>b$. At position $p$, the value $b$ automatically remains a run endpoint because
\[
  b<a<\pi_{p+1}.
\]
At position $q$, the value $a$ remains a run endpoint exactly when $q=n$ or $a<\pi_{q+1}$. The descent-top status of either predecessor changes exactly when that predecessor lies strictly between $b$ and $a$.

The case $a<b$ is symmetric. The value $a$ automatically remains a run endpoint at $q$, while $b$ remains a run endpoint at $p$ exactly when $b<\pi_{p+1}$. Again, a predecessor changes status exactly when its value lies strictly between $a$ and $b$.
\end{proof}

\begin{corollary}\label{cor:admissible-binary-lca}
The least common ancestor of two admissibly exchangeable run endpoints is binary.
\end{corollary}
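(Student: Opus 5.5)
We argue by contradiction, combining \cref{cor:endpoint-nonbinary-lca} with the local admissibility criterion of \cref{lem:local-admissibility}. Let $p<q$ be run-endpoint positions with $a=\pi_p$ and $b=\pi_q$, and assume the exchange of $a$ and $b$ is admissible. By \cref{lem:local-admissibility}, admissibility forces $q\ge p+2$. Suppose their least common ancestor is not binary. Then \cref{cor:endpoint-nonbinary-lca} says the ancestor is the node $b$, and $b$ is the maximum of the interval $\pi_p\cdots\pi_q$. In particular $a<b$, so case (ii) of \cref{lem:local-admissibility} applies.

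Case (ii) requires $b<\pi_{p+1}$. Since $q\ge p+2$, the position $p+1$ lies strictly inside the interval from $p$ to $q$. Hence $\pi_{p+1}<b$ by maximality of $b$ on that interval. This contradicts the admissibility condition, so the least common ancestor must be binary.

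The only point needing care is the boundary case $q=p+1$, where $p+1$ is not an interior position of the interval. That case is already excluded by the first sentence of \cref{lem:local-admissibility}, so no separate argument is needed. Nothing else appears to be an obstacle, since the statement follows directly from the two earlier results.
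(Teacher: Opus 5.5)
Your proof is correct and is essentially the paper's argument: both use \cref{cor:endpoint-nonbinary-lca} to make $b$ the maximum of the interval, and then note that $b$, placed at position $p$, would be followed by a smaller entry and so become a new descent top. You phrase that last step through condition (ii) of \cref{lem:local-admissibility}, using its first clause to exclude $q=p+1$, whereas the paper states it directly, which covers the adjacent case too.
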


\begin{proof}
Otherwise, by \cref{cor:endpoint-nonbinary-lca}, the later endpoint value $b$ is the maximum of the interval between the two endpoints. Moving $b$ to the earlier endpoint position makes it immediately followed by a smaller entry from that interval, so $b$ becomes a descent top. This contradicts admissibility.
\end{proof}

\begin{definition}\label{def:directed-exchanges}
Let $i<j$.
\begin{itemize}
\item We call an admissible exchange a \emph{downward exchange} if
\[
  e_j<e_i<h_i.
\]
\item We call an admissible exchange an \emph{upward exchange} if
\[
  e_i<e_j<h_i.
\]
\end{itemize}
\end{definition}

\section{Pattern avoidance and directed exchanges}\label{sec:characterizations}

We first characterize $3421$-avoidance and then characterize the words
whose images under $\T$ avoid $2143$. These descriptions identify the
terminal permutations needed for the bijection.

\subsection{The downward direction}\label{sec:downward}

\begin{lemma}\label{lem:3421-raw-criterion}
The word $\pi$ contains $3421$ if and only if either it contains $\nu$, or there are endpoint indices $i<j$ such that
\[
  e_j<e_i<h_i.
\]
\end{lemma}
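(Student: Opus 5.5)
We prove the two implications separately; the reverse implication is the easy one.

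\emph{Sufficiency.} If $\pi$ contains $\nu$, then $\pi$ contains $3421$, since an occurrence of $\nu$ is in particular an occurrence of $3421$. Suppose instead that there are endpoint indices $i<j$ with $e_j<e_i<h_i$. Then $h_i\neq-\infty$. By \eqref{eq:threshold-witness} there are positions $p<q<r_i$ with $\pi_p=h_i<\pi_q$. The entries $\pi_p,\pi_q,e_i,e_j$ sit at positions $p<q<r_i<r_j$ and satisfy
\[
  e_j<e_i<h_i=\pi_p<\pi_q .
\]
They are therefore order-isomorphic to $3421$.

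\emph{Necessity.} Assume $\pi$ contains $3421$, say at positions $p<q<s<t$ with $\pi_t<\pi_s<\pi_p<\pi_q$, and that $\pi$ avoids $\nu$. We must produce endpoint indices $i<j$ with $e_j<e_i<h_i$.

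First push $\pi_s$ to a run endpoint. Follow the maximal decreasing run through position $s$ down to its final entry, at some run endpoint $r_i\ge s$. If $r_i>s$, then $s$ is a descent-top position. Its threshold satisfies $F_\pi(s)\ge\pi_p>\pi_s$, witnessed by the increasing pair at $p<q<s$. By \cref{lem:nu-threshold} this produces an occurrence of $\nu$, a contradiction. Hence $s=r_i$ is itself a run endpoint. Set $e_i=\pi_s$; then
\[
  h_i=F_\pi(r_i)\ge\pi_p>e_i .
\]

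Next push $\pi_t$ to a later run endpoint. Let $r_j$ be the endpoint of the decreasing run containing position $t$. Since $t>s=r_i$, we have $j>i$. Along a decreasing run the values only decrease, so $e_j\le\pi_t<e_i$. Together with $e_i<h_i$ this gives $e_j<e_i<h_i$, as required.

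The only delicate point is the first step, replacing $\pi_s$ by a run endpoint. Naively sliding $\pi_s$ along its run could push the value down past $\pi_t$ and break the inequality $e_j<e_i$. The argument avoids this by not moving $\pi_s$ at all: $\nu$-avoidance, through \cref{lem:nu-threshold}, forces $\pi_s$ already to be an endpoint. For $\pi_t$ no such care is needed, because sliding it down its run only makes it smaller, which is the direction we want.
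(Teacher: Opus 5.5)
Your proof is correct and follows the paper's proof: $\nu$-avoidance forces the entry $\pi_s$ playing the role of $2$ to be a run endpoint, and $\pi_t$ is replaced by the endpoint of its decreasing run. The only cosmetic difference is that you rule out the descent-top case by citing \cref{lem:nu-threshold}, whereas the paper writes down the occurrence $\pi_p,\pi_q,\pi_s,\pi_{s+1}$ of $\nu$ directly.
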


\begin{proof}
Suppose that $e_j<e_i<h_i$. Choose an increasing pair $h_i<y$ realizing $h_i$. The four entries
\[
  h_i,\ y,\ e_i,\ e_j
\]
occur in this order and satisfy
\[
  e_j<e_i<h_i<y,
\]
so they form $3421$. An occurrence of $\nu$ is itself an occurrence of $3421$.

Conversely, consider an occurrence
\[
  x,\ y,\ z,\ w,
  \qquad w<z<x<y.
\]
If $z$ is a descent top, let $d<z$ be its immediate successor. Then $x,y,z,d$ form $\nu$. Hence, if $\pi$ avoids $\nu$, the value $z$ is a run endpoint, say $z=e_i$. Let $e_j$ be the endpoint of the maximal decreasing run containing $w$. Since $z$ occurs before $w$ and is itself an endpoint, this run lies strictly after $z$, so $j>i$. Moreover,
\[
  e_j\le w<z=e_i,
\]
while the increasing pair $x<y$ gives $h_i\ge x>e_i$.
\end{proof}

\begin{lemma}[Minimal-choice lemma]\label{lem:down-choice}
Suppose that $\pi$ avoids $\nu$ and that $e_j<e_i<h_i$ for some $i<j$. Then $\pi$ admits a downward exchange.
\end{lemma}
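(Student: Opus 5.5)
The plan is to choose the pair $(i,j)$ carefully among all pairs with $e_j<e_i<h_i$, and then verify the conditions of \cref{lem:local-admissibility}\textup{(i)} one at a time for $a=e_i=\pi_p$ and $b=\pi_q$, where $p=r_i$. Once those conditions hold, the exchange is admissible and satisfies $e_j<e_i<h_i$ by construction, so it is a downward exchange. The basic tool is a consequence of $\nu$-avoidance together with the monotonicity \eqref{eq:h-monotone}. Every descent-top position $s\ge r_i$ has $F_\pi(s)\ge F_\pi(r_i)=h_i$, since $F_\pi$ is monotone in its argument. By \cref{lem:nu-threshold}, avoidance of $\nu$ then gives $\pi_s>F_\pi(s)\ge h_i$. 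Hence every entry at or after position $r_i$ whose value is below $h_i$ is a run endpoint. In particular, every such entry below $a$ is an endpoint.

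\emph{Choice of $i$.} Let $i$ be the smallest index for which some $j>i$ satisfies $e_j<e_i<h_i$. \emph{Choice of $q$.} Let $j_0$ be the first endpoint index after $i$ with $e_{j_0}<a$. Starting at $r_{j_0}$, follow the maximal block of consecutive positions whose values are all below $a$. By the tool above, each of these positions is a run endpoint. Since a run endpoint is smaller than its successor, the values in the block increase. Let $q$ be the last position of the block, and set $b=\pi_q$. Then $b<a<h_i$.

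Next I check the conditions of \cref{lem:local-admissibility}\textup{(i)}.
\begin{enumerate}[label=\textup{(\arabic*)}]
\item $q\ge p+2$. We have $\pi_{p+1}>a$ because $p$ is an endpoint, so $q\ne p+1$.
\item $q=n$ or $a<\pi_{q+1}$. This holds by maximality of the block; the value $a$ itself lies at $p<r_{j_0}$, so $\pi_{q+1}\ne a$.
\item $\pi_{q-1}\notin(b,a)$. If $q-1$ lies in the block, then $\pi_{q-1}<b$ because the block increases. Otherwise $q=r_{j_0}$. In that case $\pi_{q-1}$ is either a descent top, hence greater than $\pi_q$, or an endpoint strictly between $p$ and $q$ (or $a$ itself). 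In all of these cases $\pi_{q-1}$ is not in $(b,a)$: a descent top at or after $r_i$ exceeds $h_i>a$, and endpoints between $p$ and $r_{j_0}$ exceed $a$ by the choice of $j_0$.
\item $\pi_{p-1}\notin(b,a)$. If $p-1$ is a descent top, then $\pi_{p-1}>a$. Otherwise $p-1=r_{i-1}$ and $c=\pi_{p-1}<a$. Suppose $b<c<a$. If $h_{i-1}>c$, then the pair $(i-1,j)$ satisfies $e_j<e_{i-1}<h_{i-1}$, contradicting the minimality of $i$. So $h_{i-1}\le c$. The only increasing pairs counted in $F_\pi(r_i)$ but not in $F_\pi(r_{i-1})$ have their larger entry at position $p-1$, so their smaller entry is below $c$. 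This gives $h_i\le\max(h_{i-1},c)\le c<a$, contradicting $a<h_i$.
\end{enumerate}

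The main obstacle is condition \textup{(4)}, the left predecessor of $a$. It is the one condition that the choice of $q$ does not control, and it is why $i$ is taken minimal. The crux is the bookkeeping in \textup{(4)}: the threshold can increase from $h_{i-1}$ to $h_i$ only through pairs ending at $r_{i-1}$. I would also double-check that fixing $i$ minimal does not conflict with the block choice of $q$. It should not, because the block construction only uses $e_i<h_i$ and the tool above, both of which hold for this $i$.
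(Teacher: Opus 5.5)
Your proof is correct. It has the same skeleton as the paper's: an extremal choice of the pair, followed by a check of the conditions in \cref{lem:local-admissibility}\textup{(i)}. The extremal criteria differ, however, and this moves the work around.

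The paper chooses $a$ of minimum \emph{value} among endpoints with $e_i<h_i$ and a later smaller endpoint. It takes $b$ of maximum value among the later endpoints below $a$. It then invokes $\nu$-avoidance separately at $q-1$ and when $\pi_{q+1}$ is a descent top, and uses the value-minimality of $a$ at $p-1$.

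You choose $i$ leftmost and take $b$ at the right end of the first maximal stretch of entries below $a$. You concentrate the use of $\nu$-avoidance in one observation: every descent top after $r_i$ exceeds $h_i$. This makes the stretch an increasing sequence of endpoints. The conditions at $q\pm1$ then follow from the construction and that observation, with no fresh pattern argument.

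At $p-1$ both proofs rest on the same fact. If $c=\pi_{p-1}<h_i$, the threshold witness for $h_i$ lies strictly before $p-1$, so $h_{i-1}\ge h_i>c$. Since $c$ is both earlier and smaller than $a$, either kind of minimality yields the contradiction. The paper's choice makes each check a one-line contradiction without analyzing a block. Yours gives a local recipe for $b$ (scan right from $a$) and isolates exactly where $\nu$-avoidance enters.

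Two small points:
\begin{itemize}
\item In \textup{(4)}, the contradicting pair is $(i-1,j')$, where $j'$ is the endpoint index of position $q$. It is not the hypothesis index $j$, because $e_j$ need not lie below $c$.
\item The alternative ``or $a$ itself'' in \textup{(3)} is vacuous, because $q\ge p+2$.
\end{itemize}
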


\begin{proof}
Among all endpoints $e_i$ satisfying
\[
  e_i<h_i
  \quad\text{and}\quad
  \text{some later endpoint is smaller than }e_i,
\]
choose one of minimum value and denote it by $a=e_i$. Among the later endpoints smaller than $a$, choose one of maximum value and denote it by $b=e_j$. Thus
\begin{equation}\label{eq:down-choice-values}
  b<a<h_i.
\end{equation}
Let their positions be $p<q$. Since $a>b$ and $p$ is a run endpoint,
we have $q\ge p+2$. Choose a threshold witness
\[
  H<Y,
  \qquad H=h_i,
\]
with both entries before $p$.

We verify the conditions in \cref{lem:local-admissibility}.

First, suppose that $b<\pi_{p-1}=c<a$. Since $c<a=\pi_p$, position $p-1$ is a run endpoint. Both witness entries $H,Y$ lie before $p$ and are greater than $c$, so neither occupies position $p-1$. Thus they both lie before $c$, giving $h(c)\ge H>c$. The later endpoint $b$ satisfies $b<c$, so $c$ is a smaller eligible choice than $a$, a contradiction.

Second, suppose that $b<\pi_{q-1}=c<a$. Then $c>b$, so $c,b$ are an adjacent descent. Since
\[
  b<c<a<H<Y,
\]
the entries $H,Y,c,b$ form $\nu$, a contradiction.

Finally, suppose that $q<n$ and $b<\pi_{q+1}=s<a$. If $s$ is a descent top, let $d<s$ be its immediate successor. Then $H,Y,s,d$ form $\nu$. If $s$ is a run endpoint, it is a later endpoint strictly between $b$ and $a$, contradicting the maximal choice of $b$.

Thus the local admissibility conditions hold. By \eqref{eq:down-choice-values}, the resulting admissible exchange is a downward exchange.
\end{proof}

\begin{proposition}[Downward characterization]\label{prop:down-characterization}
A word $\pi$ avoids $3421$ if and only if it avoids $\nu$ and admits no downward exchange.
\end{proposition}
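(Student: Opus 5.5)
The proposition follows by combining \cref{lem:3421-raw-criterion} with \cref{lem:down-choice}; the two directions of the equivalence are handled separately.

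Suppose first that $\pi$ avoids $3421$. An occurrence of $\nu$ is an occurrence of $3421$, so $\pi$ avoids $\nu$. If $\pi$ admitted a downward exchange between endpoint indices $i<j$, then by \cref{def:directed-exchanges} we would have $e_j<e_i<h_i$. By \cref{lem:3421-raw-criterion}, this produces an occurrence of $3421$, namely $h_i,y,e_i,e_j$ for a threshold witness $h_i<y$. This is a contradiction, so $\pi$ admits no downward exchange.

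Conversely, suppose that $\pi$ avoids $\nu$ and admits no downward exchange, and assume for contradiction that $\pi$ contains $3421$. Since $\pi$ avoids $\nu$, \cref{lem:3421-raw-criterion} yields endpoint indices $i<j$ with $e_j<e_i<h_i$. \cref{lem:down-choice}, whose hypothesis is exactly that $\pi$ avoids $\nu$ together with the existence of such a pair, then provides an admissible exchange satisfying the downward inequality, that is, a downward exchange. This contradicts the assumption.

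There is no real obstacle here, since all the work was done in \cref{lem:down-choice}, which converts an arbitrary raw inequality into an admissible one. The only point to check is that the exchange produced there is downward in the sense of \cref{def:directed-exchanges}: the indices satisfy $i<j$ because $b$ is a later endpoint, and the values satisfy $b<a<h_i$ by \eqref{eq:down-choice-values}, which is exactly the condition $e_j<e_i<h_i$.
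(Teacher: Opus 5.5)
Your proposal is correct and follows the paper's proof: both directions come from \cref{lem:3421-raw-criterion}, with \cref{lem:down-choice} turning a raw inequality $e_j<e_i<h_i$ into an admissible downward exchange. Your explicit remark that an occurrence of $\nu$ is already an occurrence of $3421$ fills in a step the paper leaves to that lemma.
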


\begin{proof}
A downward exchange supplies indices with $e_j<e_i<h_i$, so \cref{lem:3421-raw-criterion} yields an occurrence of $3421$. Conversely, if $\pi$ contains $3421$, then \cref{lem:3421-raw-criterion} gives either an occurrence of $\nu$ or indices $i<j$ with $e_j<e_i<h_i$. In the latter case, if $\pi$ avoids $\nu$, \cref{lem:down-choice} produces a downward exchange.
\end{proof}

\subsection{The upward direction}\label{sec:upward}

We now characterize the words $\pi$ for which $\T(\pi)$ avoids $2143$.

\begin{lemma}\label{lem:nu-to-2143}
If $\pi$ contains $\nu$, then $\T(\pi)$ contains $2143$.
\end{lemma}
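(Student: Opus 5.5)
The plan is to take an occurrence of $\nu$ and normalize it so that the maximum Cartesian tree shows how $\T$ moves its four entries. Then \cref{lem:lca-order} gives the relative order of these entries, or of a slight modification of them, in $\T(\pi)$. The target pattern $2143$ asks for a descending pair of small values followed by a descending pair of large values.

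\emph{Normalization.} Let $p<q<s<n$ with $\pi_{s+1}<\pi_s<\pi_p<\pi_q$, and write $x=\pi_p$, $z=\pi_s$, $w=\pi_{s+1}$. Replace $\pi_q$ by the maximum $y$ of the interval $\pi_p\cdots\pi_s$. The entry $y$ is larger than $x$ and $z$, and it sits strictly between positions $p$ and $s$. Hence we still have an occurrence of $\nu$, with $p<q<s$ and $y$ the interval maximum. Since $w<z$, the entry $y$ is also the maximum of $\pi_p\cdots\pi_{s+1}$. So $y$ is the least common ancestor of $x$ and of $z,w$. Moreover $x$ lies in the left subtree of $y$ and $z,w$ lie in its right subtree, so $y$ is binary (equivalently, a pinnacle).

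\emph{Applying $\T$.} By \cref{lem:lca-order}, in $\T(\pi)$ every node of the right subtree of $y$ precedes $y$, and $y$ precedes $x$. The pair $z,w$ has least common ancestor $z$, because $w$ is the leftmost node of the right subtree of $z$. Two cases arise.
\begin{enumerate}[label=\textup{(\alph*)}]
\item If $z$ is unary, the order $z$ before $w$ is kept. Then $\T(\pi)$ contains $z,w,y,x$ in this order with $w<z<x<y$, which is an occurrence of $2143$.
\item If $z$ is binary, it has a nonempty left subtree. Its inorder predecessor $t=\pi_{s-1}$ is smaller than $z$. Also $s-1\ne q$: otherwise the predecessor would be $y>z$, and $z$ would have an empty left subtree. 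So $q<s-1$, and $t$ lies in the right subtree of $y$. Since the least common ancestor of $t$ and $z$ is the binary node $z$, their order is reversed. Thus $\T(\pi)$ contains $z,t,y,x$ in this order with $t<z<x<y$, again a $2143$.
\end{enumerate}

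The main obstacle is the normalization step. Without choosing $y$ as the interval maximum, the least common ancestors of the large pair and the small pair are hard to control. With that choice, everything follows from \cref{lem:lca-order} together with the characterization of left subtrees by smaller predecessors used in the proof of \cref{lem:pinnacle-binary}.
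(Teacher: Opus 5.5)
Your proof is correct, but it is organized differently from the paper's.

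The paper inducts on length via $\pi=AMB$. An occurrence inside $A$ or $B$ is handled by induction. A crossing occurrence must have $x\in A$ and $z,d\in B$, so in $\T(\pi)=\T(B)M\T(A)$ the root $M$ plays the role of your $y$. For the small pair, the paper takes $z$ followed by its immediate successor $d'$ in $\T(B)$. This successor exists because $\T$ preserves descent tops (\cref{lem:T-involution-DT}), so no case split on the node type of $z$ is needed.

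You replace the induction by choosing $y$ directly as the maximum of $\pi_p\cdots\pi_{s+1}$, which is the binary least common ancestor the induction would eventually reach. You then read off every relative order from \cref{lem:lca-order}. Your split into $z$ unary versus $z$ binary is in effect an explicit proof that $z$ remains a descent top. In each case you pick either $w$ or the predecessor $t=\pi_{s-1}$, an element that ends up in the right subtree of $z$ after $\T$. The check $q\ne s-1$, which places $t$ in the right subtree of $y$, is exactly what that case needs.

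What your route buys is a non-recursive argument that names the $2143$ occurrence in terms of the original entries. The paper's version is shorter and avoids the case analysis. It also follows the same root-splitting template as \cref{lem:132-extraction} and \cref{lem:extract-proto-up}.
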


\begin{proof}
We use induction on the length. Write $\pi=A M B$, where $M$ is the maximum letter. If an occurrence of $\nu$ lies entirely in $A$ or entirely in $B$, the result follows by induction.

Consider an occurrence crossing $M$. Write its adjacent descent as $z>d$ and its preceding increasing pair as $x<y$, so
\[
  d<z<x<y.
\]
The adjacent descent cannot contain $M$: the alternatives $z=M$ and $d=M$ contradict $z<x$ and $z>d$, respectively. If $z,d$ lie in $A$, then the earlier entries $x,y$ also lie in $A$, contrary to the assumption that the occurrence crosses $M$. Hence $z,d$ lie in $B$, and the selected value $x$ lies in $A$, with
\[
  z<x<M.
\]

The value $z$ is a descent top in $B$. By \cref{lem:T-involution-DT}, it remains a descent top in $\T(B)$; let $d'<z$ be its immediate successor there. Since
\[
  \T(\pi)=\T(B)M\T(A),
\]
the entries
\[
  z,\ d',\ M,\ x
\]
occur in this order and satisfy $d'<z<x<M$. They form $2143$.
\end{proof}

\begin{lemma}\label{lem:up-to-2143}
If $\pi$ admits an upward exchange, then $\T(\pi)$ contains $2143$.
\end{lemma}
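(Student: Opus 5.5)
We argue by induction on the length, following the pattern of \cref{lem:nu-to-2143}. Fix an upward exchange of $a=e_i$ at position $p$ and $b=e_j$ at position $q>p$, and fix a threshold witness $H<Y$ before $p$ with $H=h_i$. Then
\[
  a<b<H<Y .
\]
The $2143$ we aim to build in $\T(\pi)$ has the shape
\[
  c_2,\ c_1,\ c_4,\ c_3,\qquad c_1<c_2<c_3<c_4,
\]
where $c_4$ is a binary ancestor whose two subtrees are exchanged by $\T$.

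First we locate $a$ and $b$ in the tree. By \cref{cor:admissible-binary-lca}, the least common ancestor $L$ of $a$ and $b$ is binary; here $L$ is the maximum of the interval $\pi_p\cdots\pi_q$. Neither $a$ nor $b$ equals $L$. For $a$ this follows from \eqref{eq:endpoint-no-right}. For $b$, an ancestral $b$ would be the interval maximum, which is excluded exactly as in the proof of \cref{cor:admissible-binary-lca}. Hence $a$ lies in the left subtree $\Lambda$ of $L$ and $b$ in the right subtree $R$, and
\[
  \T(\pi)=\cdots\T(R)\,L\,\T(\Lambda)\cdots
\]
on the subword spanned by the subtree of $L$. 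We will take $c_4=L$ and choose $c_3$ from $\T(\Lambda)$ and $c_1,c_2$ from $\T(R)$.

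Next we split on the position of the witness pair $H,Y$ relative to $L$.

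\emph{Case 1: $H$ and $Y$ both lie in $\Lambda$.} This happens when $L>Y$ and no larger entry separates them. Then $c_3=H$ (or $Y$) is an entry of $\T(\Lambda)$ exceeding $b$. It remains to find a descent $c_2>c_1$ inside $\T(R)$ with $c_2<H$. By \cref{lem:T-involution-DT}, it suffices to find a descent top $t\in R$ with $t<H$; its successor in $\T(R)$ is then a smaller letter, giving $c_2=t$ and $c_1$ its successor. The natural candidate is the predecessor $c=\pi_{q-1}$ of $b$, which lies in $R$ because $q-1>p$.
\begin{itemize}
\item If $c<b$, then $c$ is a run endpoint. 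We then look further left in $R$, to the first element of $R$ (the leftmost node of $R$) or to the decreasing run ending at $b$.
\item If $c>b$, then admissibility (\cref{lem:local-admissibility}(ii)) gives $c\notin(a,b)$, which is automatic here. So $c$ is a descent top above $b$. We must show $c<H$. Otherwise $H,Y,c,b$ would almost form $\nu$, and we would reduce to $\nu$ using \cref{lem:nu-to-2143}.
\end{itemize}
So in every subcase we either find the required descent or fall back to an occurrence of $\nu$, which already yields $2143$ in $\T(\pi)$ by \cref{lem:nu-to-2143}.

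\emph{Case 2: $H$ or $Y$ lies outside the subtree of $L$.} This happens when some ancestor of $L$ separates the witness from $a$. Write $\pi=AMB$ with $M$ the global maximum.
\begin{itemize}
\item If $a$, $b$, $H$ and $Y$ all lie in $A$, or all lie in $B$, the upward exchange restricts to that side. Its admissibility and threshold are local, so we apply the induction hypothesis there.
\item If the witness lies in $A$ and $a,b$ lie in $B$, then $\T(\pi)=\T(B)\,M\,\T(A)$. We take $c_4=M$ and $c_3=H\in\T(A)$, and obtain $c_2>c_1$ with $c_2<H$ from the descent inside $\T(B)$ supplied by the binary node $L$, exactly as in Case 1.
\item The configuration with $a\in A$ and $b\in B$ is handled by taking $L=M$.
\end{itemize}

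The main obstacle is the step shared by both cases: producing a descent inside $\T(R)$ whose top lies below $H$. I would try first the descent top $\pi_{q-1}$, or else the descent top at the start of the decreasing run ending at $b$. I expect to bound it by $H$ by combining the admissibility conditions $\pi_{q-1}\notin(a,b)$ and $b<\pi_{p+1}$ with the minimality of the configuration. If that bound fails, the offending entry together with $H,Y$ produces $\nu$, and \cref{lem:nu-to-2143} finishes the proof.
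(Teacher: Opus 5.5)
Your proposal has a genuine gap. It sits exactly at the step you flag as the main obstacle, and it cannot be closed within the shape you chose. With $c_4=L$, you need $c_2>c_1$, both below $c_3$, preceding $L$ in $\T(\pi)$, and you look for them as a descent of $\T(R)$. But $R$ may consist of $b$ alone, and $\pi$ may avoid $\nu$, so neither branch of your dichotomy is available.

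Concretely, let $\pi=34152$ and exchange $a=1$ (position $3$, threshold $h=3$ witnessed by the pair $3,4$) with $b=2$ (position $5$). By \cref{lem:local-admissibility} this upward exchange is admissible. Here $L=5$, $\Lambda=341$ and $R=2$, so you are in Case~1. Yet $\T(R)=2$ has no descent, and $\pi$ avoids $\nu$. In fact $\T(\pi)=25143$ contains no $2143$ in which $L=5$ plays the largest entry, because only one letter precedes $5$.

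The same example refutes your claim that $\pi_{q-1}$ lies in $R$, since here $\pi_{q-1}=L$. Your third bullet of Case~2 inherits the failure, because it reduces to Case~1. Separately, in the subcase $c>b$ with $c>H$, the entries $H,Y,c,b$ are not an occurrence of $\nu$, so ``almost form $\nu$'' gives nothing.

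The missing idea is to let $a$ and $b$ themselves play the roles of $1$ and $2$, and to take the $4$ and $3$ from the witness region rather than from $L$. Put $x=H$, and let $y$ be the largest entry strictly between this $x$ and $a$. It exists and satisfies $y>x$, because $Y$ lies there. Now check the six pairs:
\begin{itemize}
\item $y$ is a strictly interior interval maximum, so it is binary and is the least common ancestor of each of $\{x,y\}$, $\{x,a\}$, $\{y,a\}$.
\item $\{a,b\}$ has a binary least common ancestor by \cref{cor:admissible-binary-lca}.
\item $\{x,b\}$ has the larger value $y$ strictly inside its interval.
\item For $\{y,b\}$, the interval maximum is either strictly interior, or it is $y$ itself, which then has $x$ in its left subtree and $b$ in its right subtree.
\end{itemize}
By \cref{lem:lca-order}, $\T$ reverses all six pairs, so $x,y,a,b$ appear in $\T(\pi)$ as $b,a,y,x$. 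Since $a<b<x<y$, this is an occurrence of $2143$.

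This is the paper's argument, and it needs neither induction over the root decomposition nor any appeal to $\nu$. In the example above it yields the occurrence $2,1,4,3$ in $25143$.
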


\begin{proof}
Let the exchanged endpoint values be
\[
  a=e_i<b=e_j<h_i.
\]
Choose a threshold witness whose lower entry is $x=h_i$, and let $y$ be the
largest entry occurring strictly after this occurrence of $x$ and strictly
before the position of $a$. Such an entry exists and satisfies $y>x$. Thus
the entries occur as
\[
  x,\ y,\ a,\ b,
\]
and their values satisfy
\begin{equation}\label{eq:xyab-values}
  a<b<x<y.
\end{equation}

We show that the least common ancestor of every pair among these four entries is binary. The value $y$ is the maximum of the interval from $x$ to $a$, and it occurs strictly inside that interval. Hence $y$ is binary and is the least common ancestor of each of the pairs $\{x,y\}$, $\{x,a\}$, and $\{y,a\}$. The least common ancestor of $a,b$ is binary by \cref{cor:admissible-binary-lca}. The interval from $x$ to $b$ contains the strictly internal value $y>x,b$, so the least common ancestor of $x,b$ is binary. Finally, if the maximum of the interval from $y$ to $b$ is strictly internal, then the least common ancestor of $y,b$ is binary. If that maximum is $y$, then $y$ has a right subtree containing $b$ and, because it is also the maximum of the interval from $x$ to $a$, a left subtree containing $x$; hence it is again binary.

By \cref{lem:lca-order}, every pairwise order among $x,y,a,b$ is reversed by $\T$. Their order in $\T(\pi)$ is therefore
\[
  b,\ a,\ y,\ x.
\]
Together with \eqref{eq:xyab-values}, this is an occurrence of $2143$.
\end{proof}

The converse will use endpoint pairs satisfying
\begin{equation}\label{eq:up-candidate}
  i<j,\qquad e_i<e_j<h_i,\qquad
  \text{the least common ancestor of $e_i,e_j$ is binary}.
\end{equation}
We first find such a pair and then show that one can be chosen to give
an admissible exchange.

\begin{lemma}\label{lem:subword-lifting}
Let $\pi=A M B$, where $M$ is the maximum letter.
\begin{enumerate}[label=\textup{(\roman*)}]
\item A run endpoint of $A$ or $B$ remains a run endpoint of $\pi$.
\item For an endpoint $v$ in $A$, its threshold is unchanged; for an endpoint $v$ in $B$, its threshold can only increase.
\item The least common ancestor of two nodes both lying in $A$, or both lying in $B$, is unchanged.
\end{enumerate}
Consequently, a pair satisfying~\eqref{eq:up-candidate} in $A$ or $B$
also satisfies it in $\pi$.
\end{lemma}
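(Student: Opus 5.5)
We verify the three parts directly from the definitions and then combine them for the consequence. Throughout, write $\pi=AMB$ with $M$ the maximum letter.

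For part (i), we check that the run-endpoint condition survives the insertion of $M$ and of the other block. Let $v$ be a run endpoint of $A$. If $v$ is not the last letter of $A$, its successor in $\pi$ is the same larger letter it had in $A$. If $v$ is the last letter of $A$, its successor in $\pi$ is $M>v$. In either case $v$ is a run endpoint of $\pi$. Let $v$ be a run endpoint of $B$. If $v$ is not the last letter of $B$, its successor is unchanged. If $v$ is the last letter of $B$, it is the last letter of $\pi$. So $v$ is again a run endpoint.

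For part (ii), we compare the sets of increasing pairs $p<q<r$ that enter the threshold \eqref{eq:threshold}. For $v$ in $A$, the positions before $v$ in $\pi$ are exactly the positions before $v$ in $A$, so the defining set is identical and the threshold is unchanged. For $v$ in $B$, every increasing pair lying before $v$ inside $B$ is still an increasing pair before $v$ in $\pi$. Hence the maximum defining $F$ is taken over a superset, and the threshold can only increase. The $-\infty$ convention is monotone as well.

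For part (iii), we use the recursive definition of the maximum Cartesian tree. The trees of $A$ and $B$ are literally the left and right subtrees of the root $M$. So for two nodes both in $A$, their least common ancestor in $\pi$ is the one computed inside the subtree of $A$, and the same holds for $B$. The node sets and the child structure of that subtree are unchanged, so binary nodes stay binary. Alternatively, one can cite the remark that the least common ancestor of positions $p<q$ is the maximum of the interval $\pi_p\cdots\pi_q$. An interval inside $A$ or inside $B$ is the same word in $\pi$, and its maximum is strictly internal in $\pi$ if and only if it is strictly internal in the subword.

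For the consequence, take $e_i,e_j$ satisfying \eqref{eq:up-candidate} in $A$ or $B$. By (i), both are endpoints of $\pi$, and their relative order is unchanged, so $i<j$ still holds after reindexing. The inequality $e_i<e_j$ is a statement about values only. By (ii), the threshold of $e_i$ in $\pi$ is at least its threshold in the subword, so $e_j<h_i$ persists. By (iii), the least common ancestor is still binary.

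The only mildly delicate step is the boundary case in (i): a last letter of $A$ is followed by $M$, and a last letter of $B$ becomes the final letter of $\pi$. Beyond that, the one point needing care in the consequence is the direction of the inequality in (ii), and it is exactly the direction that keeps $e_j<h_i$ true.
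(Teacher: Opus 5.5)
Your proof is correct and takes essentially the same route as the paper. Part (i) uses unchanged successor relations, with the last letter of $A$ now followed by $M$; part (ii) compares the prefixes before the endpoint, which are identical for $A$ and only enlarged for $B$; part (iii) uses the fact that the Cartesian trees of $A$ and $B$ sit unchanged as the two subtrees of the root $M$, and the consequence then follows as you describe.
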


\begin{proof}
All successor relations internal to $A$ and $B$ are unchanged. The last entry of $A$, if present, is followed by the larger value $M$, and hence remains a run endpoint. This proves (i). The prefix preceding an endpoint in $A$ is unchanged, while an endpoint in $B$ only gains additional preceding entries, proving (ii). The Cartesian trees of $A$ and $B$ occur unchanged as the two subtrees of the root $M$, proving (iii). The final assertion follows immediately.
\end{proof}

\Needspace{8\baselineskip}
\begin{lemma}[$132$ extraction]\label{lem:132-extraction}
Suppose that $\gamma$ avoids $\nu$ and that $\T(\gamma)$ contains entries
\[
  w,\ y,\ x
\]
in this order, with
\[
  w<x<y.
\]
Then $w$ is a run endpoint in $\gamma$, and its threshold satisfies
\[
  h(w)\ge x.
\]
\end{lemma}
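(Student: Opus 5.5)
We argue by induction on the length of $\gamma$, following the recursion \eqref{eq:T-recursion}. Write $\gamma=A\,M\,B$ with $M$ the maximum letter. Both $A$ and $B$ avoid $\nu$, since they are factors of $\gamma$. The strategy is to show that the three entries $w,y,x$ either all come from a single block, where induction and \cref{lem:subword-lifting} finish, or they straddle $M$ in one specific way, which we handle directly using $\nu$-avoidance.

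\emph{Unary cases.} If $A=\emptyword$, then $\T(\gamma)=M\,\T(B)$. The first entry $w$ cannot be $M$, because $w<x$. Hence $w,y,x$ all lie in $\T(B)$. If $B=\emptyword$, then $\T(\gamma)=\T(A)\,M$. Here $x\ne M$, because $x<y$. This puts $x$ inside $\T(A)$, so $y$, which precedes $x$, is not the final entry $M$. Hence $w,y,x$ all lie in $\T(A)$. In either case the induction hypothesis applied to the block shows two things. First, $w$ is a run endpoint of that block. Second, the threshold of $w$ computed in that block is at least $x$. By \cref{lem:subword-lifting}(i), $w$ remains a run endpoint of $\gamma$. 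By \cref{lem:subword-lifting}(ii), its threshold in $\gamma$ is unchanged (block $A$) or can only increase (block $B$). So $h(w)\ge x$ in $\gamma$.

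\emph{Binary case.} Here $\T(\gamma)=\T(B)\,M\,\T(A)$. Again $w\ne M$ and $x\ne M$, since each is smaller than another of the three letters. If all three entries lie in $\T(B)$ or all lie in $\T(A)$, we conclude by induction and \cref{lem:subword-lifting} exactly as above.

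Otherwise the entries are split across $M$. If $w$ lay in $\T(A)$, then $y$ and $x$, which come after it, would also lie in $\T(A)$; so $w$ lies in $\T(B)$. If $x$ lay in $\T(B)$, then $w$ and $y$ would both precede it and lie in $\T(B)$ as well; so $x$ lies in $\T(A)$. Hence the only remaining configuration is $w\in B$ and $x\in A$, with $y$ arbitrary. Thus in $\gamma$ the entries $x$ and $M$ occur before $w$, and $x<M$.

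\emph{The crossing configuration.} We must show that $w$ is a run endpoint of $\gamma$. Suppose instead that $w$ is a descent top of $\gamma$, and let $d<w$ be its immediate successor. Since $w\ne M$, the letter $d$ also lies in $B$. Then $x,\,M,\,w,\,d$ occur in this order in $\gamma$, with $w,d$ adjacent and
\[
  d<w<x<M .
\]
This is an occurrence of $\nu$, a contradiction. Hence $w$ is a run endpoint. Moreover, $x<M$ is an increasing pair lying before $w$, so by the definition \eqref{eq:threshold} we get $h(w)\ge x$.

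The main point requiring care is this crossing case, together with the preliminary observation that it is the only cross-block configuration. That observation follows from the order of the blocks in $\T(\gamma)$ and the inequalities $w<x<y$, which rule out $w=M$ and $x=M$. Once that is established, $\nu$-avoidance settles the run-endpoint claim in one line. The role of \cref{lem:subword-lifting}(ii) in the inductive cases is only to guarantee that passing from a block to $\gamma$ never decreases a threshold, which is the direction we need.
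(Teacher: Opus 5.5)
Your proof is correct and follows the paper's argument. Both use induction on the decomposition at the maximum letter, handle same-block occurrences by \cref{lem:subword-lifting}, and settle the unique crossing configuration $w\in B$, $x\in A$ by exhibiting $x,M,w,d$ as an occurrence of $\nu$. The only difference is that you treat the unary cases explicitly, whereas the paper absorbs them into the remark that a crossing occurrence forces both subtrees to be nonempty.
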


\begin{proof}
We argue by induction on the length. Write $\gamma=C N D$, where $N$ is the maximum letter. If all three selected entries lie in $\T(C)$ or all lie in $\T(D)$, the result follows from the induction hypothesis and \cref{lem:subword-lifting}.

Suppose that the occurrence crosses the root. Then $C,D\ne\emptyword$ and
\[
  \T(\gamma)=\T(D)N\T(C).
\]
The smallest selected entry $w$ cannot equal $N$, and $x$ cannot equal $N$ because $x<y$. Since $w$ is the first selected entry and $x$ is the last, crossing the root forces
\[
  w\in D,
  \qquad x\in C.
\]
In the original word, the increasing pair $x<N$ occurs before $w$.

If $w$ were a descent top, let $d<w$ be its immediate successor. Then
\[
  d<w<x<N,
\]
and $x,N,w,d$ would form $\nu$, a contradiction. Thus $w$ is a run endpoint. The increasing pair $x<N$ lies before it, so $h(w)\ge x$.
\end{proof}

\begin{lemma}\label{lem:low-value-inversion}
Suppose that every descent-top value of a word $\gamma$ is greater than $c$. If the subsequence of $\T(\gamma)$ induced by the entries smaller than $c$ contains an inversion, then $\gamma$ has two run-endpoint values $a$ and $b$ such that
\[
  a<b<c,
\]
with $a$ occurring before $b$, and their least common ancestor is binary.
\end{lemma}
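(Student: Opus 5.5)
The idea is that every entry below $c$ is forced to be a run endpoint, and that for run endpoints the shape of the Cartesian tree makes an order reversal under $\T$ unavoidable.

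\textbf{Step 1: entries below $c$ are run endpoints.} Every descent-top value of $\gamma$ exceeds $c$. Hence each entry of $\gamma$ smaller than $c$ is not a descent top, so it is a run endpoint of $\gamma$. By \cref{lem:T-involution-DT} the same holds in $\T(\gamma)$, although we only need it in $\gamma$.

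\textbf{Step 2: fix an inversion.} By hypothesis there are entries $u,v<c$ with $u$ occurring before $v$ in $\T(\gamma)$ and $u>v$. Both are run-endpoint values of $\gamma$ by Step 1. We claim their relative order is reversed by $\T$. Suppose instead that $u$ also precedes $v$ in $\gamma$. By \cref{lem:lca-order} their least common ancestor would then be nonbinary. \Cref{cor:endpoint-nonbinary-lca} applies to the run endpoints $u,v$ (in this order) and says the least common ancestor is the later node $v$, with $v$ the largest entry of the interval from $u$ to $v$. This contradicts $u>v$.

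\textbf{Step 3: conclude.} By Step 2, $v$ occurs before $u$ in $\gamma$, and by \cref{lem:lca-order} the order reversal means their least common ancestor is binary. Set $a=v$ and $b=u$. Then $a$ and $b$ are run-endpoint values of $\gamma$ with $a<b<c$, $a$ occurring before $b$, and a binary least common ancestor, as required.

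The only point needing care is Step 2, namely excluding an inversion that is already present in $\gamma$ and left unchanged by $\T$. \Cref{cor:endpoint-nonbinary-lca} disposes of it immediately, since it forces the later endpoint to be the larger one.
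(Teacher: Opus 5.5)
Your proof is correct and follows the paper's argument essentially verbatim. You observe that entries below $c$ are run endpoints, rule out an inversion already present in $\gamma$ by combining \cref{lem:lca-order} with \cref{cor:endpoint-nonbinary-lca}, and conclude that the order is reversed, so the least common ancestor is binary; applying the two lemmas in the opposite order within Step 2 is immaterial.
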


\begin{proof}
Every entry smaller than $c$ is a run endpoint. Choose an inversion $z\cdots w$ in the indicated subsequence of $\T(\gamma)$, so that $w<z<c$.

If $z$ also occurred before $w$ in $\gamma$, then their least common ancestor could not be nonbinary: by \cref{cor:endpoint-nonbinary-lca}, the later value $w$ would have to be the interval maximum, whereas the interval contains $z>w$. Hence their least common ancestor would be binary, and \cref{lem:lca-order} would place $w$ before $z$ in $\T(\gamma)$, a contradiction.

Therefore $w$ occurs before $z$ in $\gamma$. Their order is reversed by $\T$, so \cref{lem:lca-order} gives a binary least common ancestor. Taking $a=w$ and $b=z$ proves the claim.
\end{proof}

\begin{lemma}[Extraction of an endpoint pair]\label{lem:extract-proto-up}
Suppose that $\pi$ avoids $\nu$ but $\T(\pi)$ contains $2143$. Then $\pi$ contains an endpoint pair satisfying~\eqref{eq:up-candidate}.
\end{lemma}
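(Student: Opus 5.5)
The plan is to extract the endpoint pair directly from an occurrence of $2143$ in $\T(\pi)$, combining the $132$ extraction lemma (\cref{lem:132-extraction}) with the LCA order criterion (\cref{lem:lca-order}). Fix an occurrence of $2143$ in $\T(\pi)$. Write its entries, in their order in $\T(\pi)$, as
\[
  z,\ w,\ y,\ x,
  \qquad w<z<x<y.
\]

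\emph{Step 1: the two low entries are run endpoints of $\pi$.} The entries $w,y,x$ occur in this order in $\T(\pi)$ and satisfy $w<x<y$. Since $\pi$ avoids $\nu$, \cref{lem:132-extraction} applied with $\gamma=\pi$ shows that $w$ is a run endpoint of $\pi$ with $h(w)\ge x$. The same lemma applied to the triple $z,y,x$ shows that $z$ is also a run endpoint of $\pi$. Both applications use that $x$ and $y$ come after $z$ and $w$ in $\T(\pi)$.

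\emph{Step 2: $w$ precedes $z$ in $\pi$, and their least common ancestor is binary.} This is the argument from the proof of \cref{lem:low-value-inversion}, now applied to the specific pair $w,z$ and without needing that lemma's hypothesis on descent tops.
\begin{itemize}
\item Suppose $z$ preceded $w$ in $\pi$. Since $w$ precedes $z$ in $\T(\pi)$, their order would be reversed by $\T$, so by \cref{lem:lca-order} their least common ancestor would be binary. By \cref{lem:lca-order} together with the contrapositive of \cref{cor:endpoint-nonbinary-lca}, we get the same conclusion in the other direction: a nonbinary least common ancestor would force the later endpoint $w$ to be the maximum of the interval from $z$ to $w$. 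That is impossible because $z>w$. So in either reading the least common ancestor is binary, and then \cref{lem:lca-order} says $\T$ swaps the pair, which would put $z$ after $w$ in $\pi$, a contradiction. Hence $w$ occurs before $z$ in $\pi$.
\item With $w$ before $z$ in $\pi$ and $z$ before $w$ in $\T(\pi)$, the order is reversed by $\T$. So \cref{lem:lca-order} gives that the least common ancestor of $w$ and $z$ is binary.
\end{itemize}

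\emph{Step 3: conclusion.} Set $e_i=w$ and $e_j=z$. Then $i<j$, the least common ancestor is binary, and
\[
  e_i=w<z=e_j<x\le h(w)=h_i.
\]
This is exactly~\eqref{eq:up-candidate}.

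The only nontrivial input is that \cref{lem:132-extraction} makes both $w$ and $z$ run endpoints. Without that, \cref{cor:endpoint-nonbinary-lca} would not apply in Step 2. I therefore expect checking the hypotheses of the two applications in Step 1 to be the main point of care. In particular, $z$ plays the role of the smallest entry of a $132$ occurrence $z,y,x$, since $z<x<y$ with $z$ first, $y$ second and $x$ last.
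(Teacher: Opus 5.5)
Your proof is correct, and it takes a more direct route than the paper. The paper inducts on the root decomposition $\pi=AMB$ and splits root-crossing occurrences into two cases. If only $z$ lies in $\T(B)$, it applies \cref{lem:132-extraction} inside $A$ to handle $w$, proves by a separate $\nu$-argument that $z$ is an endpoint, and gets binarity from the root $M$. If $z$ and $w$ both lie in $\T(B)$, it first shows that every descent top of $B$ exceeds $x$, so that \cref{lem:low-value-inversion} applies.

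You instead note that the occurrence $z\,w\,y\,x$ contains the two $132$-occurrences $w\,y\,x$ and $z\,y\,x$. Two global applications of \cref{lem:132-extraction} then make both $w$ and $z$ run endpoints of $\pi$ at once, with $h(w)\ge x$. The inversion argument from the proof of \cref{lem:low-value-inversion} then runs on this specific pair without that lemma's descent-top hypothesis. This removes the induction and case split from the present lemma; the induction is already packaged in \cref{lem:132-extraction}. It also makes \cref{lem:low-value-inversion} unnecessary. Finally, it shows explicitly that the extracted pair is always $(w,z)$ with $h(w)\ge x$, which is in effect what both of the paper's cases produce.

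One slip needs repair. In the first bullet of Step~2, the occurrence places $z$ before $w$ in $\T(\pi)$, not $w$ before $z$. So under your supposition the order would be preserved, not reversed. Delete that opening sentence and the phrase ``in either reading''. The remaining argument is exactly right: \cref{cor:endpoint-nonbinary-lca} forces a binary least common ancestor, and \cref{lem:lca-order} then contradicts the order $z,w$ in $\T(\pi)$.
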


\begin{proof}
We use induction on the length. Write $\pi=A M B$, and choose an occurrence
\[
  z,\ w,\ y,\ x
\]
of $2143$ in $\T(\pi)$, listed in their order of appearance. Thus
\begin{equation}\label{eq:2143-values}
  w<z<x<y.
\end{equation}
If the occurrence lies entirely in $\T(A)$ or entirely in $\T(B)$, the result follows from induction and \cref{lem:subword-lifting}.

Assume that the occurrence crosses the root. Then $A,B\ne\emptyword$ and
\[
  \T(\pi)=\T(B)M\T(A).
\]
The first selected value $z$ must lie in $\T(B)$; it cannot equal $M$ because $z<y$. The values $w$ and $x$ cannot equal $M$ because $w$ is the smallest selected value and $x<y$. There are two cases.

\smallskip
\noindent\emph{Case 1: only $z$ lies in $\T(B)$.}
Then $w,y,x$ lie in $\T(A)$ and satisfy $w<x<y$. By \cref{lem:132-extraction}, the value $w$ is a run endpoint in $A$ and
\[
  h_A(w)\ge x.
\]
In the original word, the increasing pair $x<M$ lies before $z$. If $z$ were a descent top, with immediate successor $d<z$, then $x,M,z,d$ would form $\nu$. Hence $z$ is also a run endpoint. By \eqref{eq:2143-values},
\[
  w<z<x\le h(w),
\]
and $w,z$ lie in opposite subtrees of the binary root $M$. Thus $(w,z)$ is an endpoint pair satisfying~\eqref{eq:up-candidate}.

\smallskip
\noindent\emph{Case 2: both $z$ and $w$ lie in $\T(B)$.}
Because the occurrence crosses the root, its final entry $x$ lies in $\T(A)$. The increasing pair $x<M$ occurs before all entries of $B$ in the original word. If $B$ had a descent top $t<x$, with immediate successor $d<t$, then $x,M,t,d$ would form $\nu$. Hence every descent-top value in $B$ is greater than $x$.

The entries $z,w$ form an inversion in the subsequence of $\T(B)$ induced by values smaller than $x$. By \cref{lem:low-value-inversion}, the word $B$ has run endpoints
\[
  a<b<x
\]
in this order, with a binary least common ancestor. The increasing pair $x<M$ lies before $a$, so $h(a)\ge x>b$. Thus $(a,b)$ is an endpoint pair satisfying~\eqref{eq:up-candidate}.

These cases exhaust all occurrences crossing the root.
\end{proof}

\begin{lemma}[Admissible refinement]\label{lem:proto-to-up}
If $\pi$ avoids $\nu$ and contains an endpoint pair satisfying~\eqref{eq:up-candidate}, then $\pi$ admits an upward exchange.
\end{lemma}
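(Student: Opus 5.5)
The plan mirrors the minimal-choice argument of \cref{lem:down-choice}: choose an extremal candidate pair and then check the conditions of \cref{lem:local-admissibility}\textup{(ii)} one at a time. A preliminary observation drives everything. For run endpoints $a$ before $b$, the least common ancestor is the node of the maximum of the interval from $a$ to $b$. That maximum cannot be $a$, which has no right subtree by \eqref{eq:endpoint-no-right}. If it were $b$, then $b$ would have a nonempty left subtree but, again by \eqref{eq:endpoint-no-right}, no right subtree, so $b$ would be unary. Hence a binary least common ancestor means exactly that the interval maximum sits strictly inside the interval. In particular the two positions are not adjacent.

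\textbf{Choice of the pair.} Among all pairs satisfying~\eqref{eq:up-candidate}, first choose the earlier value $a=e_i$ as large as possible. Then, among the later endpoints $b$ for which $(a,b)$ satisfies~\eqref{eq:up-candidate}, choose $b=e_j$ as small as possible. Let the positions be $p<q$; by the observation above, $q\ge p+2$. Fix a threshold witness $H=h_i<Y$ occurring before $p$, so that $a<b<H<Y$. Finally, let $M'>b$ be the strictly internal maximum of the interval from $p$ to $q$.

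\textbf{Checking the local conditions.} The three conditions of \cref{lem:local-admissibility}\textup{(ii)} are handled as follows.
\begin{enumerate}[label=\textup{(\arabic*)}]
\item \emph{The condition $b<\pi_{p+1}$.} Since $p$ is a run endpoint, $c=\pi_{p+1}>a$. Suppose $c<b$.
If $c$ were a descent top with successor $d$, then $H,Y,c,d$ would form $\nu$.
So $c$ is a run endpoint. Its threshold satisfies $h(c)\ge H>b$, because the witness lies before $p$. The maximum $M'$ is not $c$, since $c<b<M'$, so $M'$ lies strictly between $c$ and $b$. Hence $(c,b)$ satisfies~\eqref{eq:up-candidate} with first value $c>a$. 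This contradicts the maximality of $a$.
\item \emph{The condition $\pi_{p-1}\notin(a,b)$.} Suppose $c=\pi_{p-1}\in(a,b)$. Then $c>a=\pi_p$, so $c$ is a descent top with successor $a$. The entries $H,Y$ exceed $c$, so neither occupies position $p-1$; they therefore lie before $c$. Then $H,Y,c,a$ form $\nu$, a contradiction.
\item \emph{The condition $\pi_{q-1}\notin(a,b)$.} Suppose $c=\pi_{q-1}\in(a,b)$. Since $c<b=\pi_q$, the value $c$ is a run endpoint, and $q-1>p$. The maximum $M'$ is not $c$ and not at position $q$, so $M'$ lies strictly between $p$ and $q-1$. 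Thus the least common ancestor of $a$ and $c$ is binary. The threshold $h_i$ of $a$ is unchanged, and $a<c<b<h_i$. So $(a,c)$ satisfies~\eqref{eq:up-candidate} with a later value $c<b$. This contradicts the minimality of $b$.
\end{enumerate}
Thus the exchange of $a$ and $b$ is admissible. Since $e_i<e_j<h_i$, it is an upward exchange.

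\textbf{Expected difficulty.} The main obstacle is ordering the two extremal choices so that both contradictions in (1) and (3) land on a choice that is actually extremal. Item (1) needs the first element maximized globally, while item (3) needs the second element minimized only for that fixed first element. One must also confirm that the binary-LCA property transfers to the new pairs $(c,b)$ and $(a,c)$. I expect this transfer to go through via the strictly internal maximum $M'$, as sketched in (1) and (3).
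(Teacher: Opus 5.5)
Your proof is correct and follows essentially the same strategy as the paper: an extremal choice of candidate pair, then a check of the three conditions of \cref{lem:local-admissibility}\textup{(ii)}, using the same $\nu$-occurrences and the same competing pairs $(\pi_{p+1},b)$ and $(a,\pi_{q-1})$. The differences are cosmetic: the paper first minimizes the second index $j$ and then maximizes $a$, whereas you first maximize $a$ and then minimize $b$, and you transfer the binary-LCA property through the strictly internal interval maximum $M'$ rather than through \cref{cor:endpoint-nonbinary-lca}; both orderings make the two contradictions land correctly.
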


\begin{proof}
Among all endpoint pairs satisfying~\eqref{eq:up-candidate}, first choose one whose second endpoint has minimum endpoint index $j$. Subject to this choice, choose the first endpoint with maximum value. Denote the selected pair by
\[
  a=e_i<b=e_j<h_i,
\]
and let its positions be $p<q$. Since its least common ancestor is binary, the two endpoint positions are not adjacent: adjacent endpoints satisfy $a<b$, and their least common ancestor is the later endpoint $b$, which has no right subtree.

Choose a threshold witness
\[
  H<Y,
  \qquad H=h_i,
\]
with both entries before $p$. We verify the three possible obstructions in \cref{lem:local-admissibility}.

Suppose first that $a<\pi_{p-1}=c<b$. Then $c>a$, so $c,a$ are an adjacent descent. Since $c<b<H<Y$, the witness entries $H,Y$ occur strictly before $c$, and $H,Y,c,a$ form $\nu$, a contradiction.

Next suppose that $a<\pi_{p+1}=s<b$. If $s$ is a descent top, let $d<s$ be its immediate successor. Then $H,Y,s,d$ form $\nu$. If $s$ is a run endpoint, then
\[
  a<s<b<h(s),
\]
because $h(s)\ge H$. We claim that $s,b$ have a binary least common ancestor. Otherwise, by \cref{cor:endpoint-nonbinary-lca}, the value $b$ is the maximum of the interval from $s$ to $b$. Since $a$ is immediately to the left of $s$ and $a<s<b$, the value $b$ is also the maximum of the interval from $a$ to $b$. This would make the least common ancestor of $a,b$ the nonbinary endpoint $b$, contradicting condition~\eqref{eq:up-candidate}. Thus $(s,b)$ is an endpoint pair satisfying~\eqref{eq:up-candidate} with the same second endpoint and a larger first value, contradicting the choice of $a$.

Finally suppose that $a<\pi_{q-1}=c<b$. Then $c<b$, so $c$ is a run endpoint. If $a,c$ had a nonbinary least common ancestor, then \cref{cor:endpoint-nonbinary-lca} would make $c$ the maximum of the interval from $a$ to $c$. Since $b$ immediately follows $c$ and $b>c$, the value $b$ would then be the maximum of the interval from $a$ to $b$, forcing the least common ancestor of $a,b$ to be the nonbinary endpoint $b$. This is impossible. Hence $(a,c)$ is an endpoint pair satisfying~\eqref{eq:up-candidate} whose second endpoint precedes $b$, contradicting the minimal choice of $j$.

Therefore
\[
  b<\pi_{p+1},
  \qquad
  \pi_{p-1},\pi_{q-1}\notin(a,b),
\]
with nonexistent predecessors omitted. The smaller value $a$ automatically remains a run endpoint after it is moved to position $q$, because $b$ was an endpoint. By \cref{lem:local-admissibility}, the exchange is admissible. Since $a<b<h_i$, it is an upward exchange.
\end{proof}

\begin{proposition}[Upward characterization]\label{prop:up-characterization}
The word $\T(\pi)$ avoids $2143$ if and only if $\pi$ avoids $\nu$ and admits no upward exchange.
\end{proposition}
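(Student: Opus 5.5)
The proposition follows by assembling the lemmas of this subsection, in the same way that \cref{prop:down-characterization} was assembled from \cref{lem:3421-raw-criterion,lem:down-choice}. I would prove the two implications separately, each by contraposition.

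\emph{Forward direction.} Suppose $\T(\pi)$ avoids $2143$. If $\pi$ contained $\nu$, then \cref{lem:nu-to-2143} would produce an occurrence of $2143$ in $\T(\pi)$, which is impossible. So $\pi$ avoids $\nu$. If $\pi$ admitted an upward exchange, then \cref{lem:up-to-2143} would again produce an occurrence of $2143$ in $\T(\pi)$. Hence $\pi$ admits no upward exchange. Both lemmas apply to arbitrary words with distinct letters, so nothing further is needed here.

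\emph{Reverse direction.} Suppose $\pi$ avoids $\nu$ and $\T(\pi)$ contains $2143$. By \cref{lem:extract-proto-up}, $\pi$ contains an endpoint pair satisfying~\eqref{eq:up-candidate}. Since $\pi$ avoids $\nu$, \cref{lem:proto-to-up} turns this pair into an upward exchange. This contradicts the hypothesis that $\pi$ admits no upward exchange, so $\T(\pi)$ avoids $2143$.

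All the substantive work is in the two extraction lemmas, which are already established. The one point I would check is that the hypotheses match exactly. \cref{lem:extract-proto-up} requires only $\nu$-avoidance together with an occurrence of $2143$ in $\T(\pi)$, and \cref{lem:proto-to-up} requires only $\nu$-avoidance together with a pair satisfying~\eqref{eq:up-candidate}. Both are available in the reverse direction, so neither lemma needs an extra assumption that the argument does not supply.
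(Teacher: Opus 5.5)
Your proposal is correct and matches the paper's proof. One direction uses \cref{lem:nu-to-2143} and \cref{lem:up-to-2143}, which need no hypothesis on $\pi$, and the other chains \cref{lem:extract-proto-up} with \cref{lem:proto-to-up} under $\nu$-avoidance, exactly as the paper does.
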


\begin{proof}
If $\pi$ contains $\nu$, then \cref{lem:nu-to-2143} gives an occurrence of $2143$ in $\T(\pi)$. If $\pi$ admits an upward exchange, the same conclusion follows from \cref{lem:up-to-2143}.

Conversely, suppose that $\pi$ avoids $\nu$ but $\T(\pi)$ contains $2143$. By \cref{lem:extract-proto-up}, the word $\pi$ contains an endpoint pair satisfying~\eqref{eq:up-candidate}, and by \cref{lem:proto-to-up}, it admits an upward exchange.
\end{proof}

\section{Normal forms and the bijection}\label{sec:normal-forms}

\subsection{The exchange graph and its invariants}

Fix a finite set of letters. Form a graph on the words using these
letters once each, joining two words precisely when an upward exchange or a
downward exchange transforms one into the other. Only exchanges satisfying
\cref{def:directed-exchanges} are edges of this graph. We work with its
$\nu$-avoiding components; the next lemma shows that avoidance of $\nu$
is preserved along every edge.

\begin{lemma}[Threshold invariance]\label{lem:threshold-invariance}
An upward exchange or downward exchange preserves $F_\pi(r)$ for every position $r$. Consequently, it preserves the endpoint threshold vector $(h_1,\ldots,h_m)$ and the property of avoiding $\nu$.
\end{lemma}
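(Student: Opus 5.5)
We must show that an upward or downward exchange preserves $F_\pi(r)$ for every position $r$, where $F_\pi(r)=\max\{\pi_p:p<q<r,\ \pi_p<\pi_q\}$. The exchange swaps $a=\pi_p$ and $b=\pi_q$ at run-endpoint positions $p<q$. In both directions the two values lie below the threshold $h=h_i=F_\pi(p)$: for a downward exchange $b<a<h$, and for an upward exchange $a<b<h$. Write $\pi'$ for the result.

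For $r\le p$ nothing changes, since the relevant prefix is untouched. For $r>p$ the key observation is that the witness pair $H<Y$ realizing $h$ lies strictly before $p$. It is therefore unaffected by the swap, so $F_{\pi'}(r)\ge h$ and $F_\pi(r)\ge h$ for every $r>p$. It then suffices to show that any increasing pair $(\pi_s,\pi_t)$ with $s<t<r$ whose lower entry is at least $h$ persists in $\pi'$, and conversely. Such a lower entry exceeds both $a$ and $b$, so it is not a moved entry.

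If the upper entry is also unmoved, the pair survives verbatim. If the upper entry is a moved value, then it is less than $h$, while the lower entry is at least $h$. This contradicts the pair being increasing. Hence the set of increasing pairs with lower value at least $h$ inside any prefix is identical in $\pi$ and $\pi'$, and the maximum is unchanged. Pairs whose lower value is below $h$ never attain the maximum, because $h$ is already attained. This proves $F_{\pi'}=F_\pi$ at every position.

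By \cref{lem:endpoint-positions-preserved}, admissibility preserves the run-endpoint positions. Since $F$ is unchanged at each position, the vector $(h_1,\ldots,h_m)$ is preserved.

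For $\nu$-avoidance we use \cref{lem:nu-threshold}: $\pi$ contains $\nu$ iff some descent-top position $s$ has $F_\pi(s)>\pi_s$. The descent positions are unchanged by \cref{lem:endpoint-positions-preserved}. The values at descent-top positions are unchanged, because the moved positions are run endpoints and hence not descent tops. Combined with the invariance of $F$, the criterion is identical for $\pi$ and $\pi'$.

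The only delicate point is the $r>p$ case, specifically checking that the witness of $h_i$ lies strictly left of $p$, so that it bounds $F$ from below after position $p$. Once that is in place, the rest is bookkeeping. The argument is the same for both directions; it uses only $\max(a,b)<h_i$ and admissibility.
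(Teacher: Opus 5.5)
Your proof is correct and matches the paper's argument: the fixed witness pair for $h_i$ lies strictly before $p$ and so bounds $F$ from below at every later position, and no increasing pair whose lower entry is at least $h_i$ can involve the moved values. The paper phrases this dually, noting that every created or destroyed pair has lower entry below $h_i$, and, as you do, derives the consequences from \cref{lem:endpoint-positions-preserved} and \cref{lem:nu-threshold}.
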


\begin{proof}
Suppose that the exchange occurs at endpoint positions $p=r_i<q=r_j$, exchanging the values $a,b$, and set $H=h_i$. In either direction,
\[
  a<H,
  \qquad b<H,
\]
and an increasing pair with lower entry $H$ lies entirely before $p$.

For $r\le p$, neither exchanged position occurs in the prefix defining $F_\pi(r)$, so the threshold is unchanged. Let $r>p$. The fixed witness with lower entry $H$ remains before $r$, so the threshold is at least $H$ both before and after the exchange. Every increasing pair created or destroyed by the exchange contains $a$ or $b$. If one of these values is the lower entry, that lower entry is less than $H$; if it is the upper entry, the lower entry is smaller still. Thus every changed candidate for the maximum in \eqref{eq:threshold} is less than $H$, while a candidate equal to $H$ remains present. Therefore $F_\pi(r)$ is unchanged.

The endpoint positions are preserved by \cref{lem:endpoint-positions-preserved}, so the vector $(h_1,\ldots,h_m)$ is unchanged. Finally, \cref{lem:nu-threshold} characterizes $\nu$ using the descent-top positions, their values, and the corresponding thresholds. All of these are preserved.
\end{proof}

\begin{definition}\label{def:movable-slot}
We call an endpoint slot $i$ \emph{movable} if
\[
  e_i<h_i.
\]
\end{definition}

\begin{lemma}\label{lem:exchange-invariants}
Upward exchanges and downward exchanges are inverse operations. In each component, the following data are fixed:
\begin{enumerate}[label=\textup{(\roman*)}]
\item the run-endpoint positions, the values at all other positions, and the threshold vector;
\item the set of movable endpoint slots;
\item the values in the nonmovable endpoint slots;
\item the set of values occupying the movable endpoint slots.
\end{enumerate}
\end{lemma}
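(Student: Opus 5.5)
The plan is to prove first that the two kinds of exchange are mutually inverse, and then to check that each listed datum is unchanged by a single edge of the exchange graph; constancy on components then follows by induction along paths.

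For the inverse claim, suppose an admissible exchange at endpoint positions $p=r_i<q=r_j$ turns $\pi$ into $\pi'$.
\begin{itemize}
\item By \cref{lem:endpoint-positions-preserved}, $p$ and $q$ are still run endpoints of $\pi'$, and the indices $i,j$ label the same slots.
\item Exchanging the same two positions in $\pi'$ restores $\pi$. It preserves $\DT$, since $\DT(\pi')=\DT(\pi)$, so it is admissible.
\item By \cref{lem:threshold-invariance}, $h_i$ is the same in $\pi$ and $\pi'$.
\item If the original exchange is downward, then $e_j<e_i<h_i$ in $\pi$. In $\pi'$ the values at slots $i$ and $j$ are $e_j$ and $e_i$, which satisfy $e'_i<e'_j<h_i$. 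So the reverse exchange is upward.
\item Symmetrically, the reverse of an upward exchange is downward.
\end{itemize}
Hence the edges of the graph are well defined and undirected, and we may argue edge by edge.

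For (i), \cref{lem:endpoint-positions-preserved} gives preservation of the endpoint positions. The exchange moves values only at two endpoint positions, so every non-endpoint value is unchanged. \cref{lem:threshold-invariance} gives preservation of the threshold vector.

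For (ii), (iii) and (iv), we use the defining inequalities of an exchange at slots $i<j$. In both directions, the two exchanged values are each below $h_i$, and by \eqref{eq:h-monotone} $h_i\le h_j$.
\begin{itemize}
\item Before the exchange, slot $i$ holds a value below $h_i$, and slot $j$ holds a value below $h_i\le h_j$. So both slots are movable.
\item After the exchange, the same inequalities hold with the values swapped, since both values are below $h_i\le h_j$. So both slots remain movable.
\item Every other slot keeps both its value and its threshold, so its movability status is unchanged. This gives (ii).
\item Only movable slots change their contents, so the values in nonmovable slots are fixed. This gives (iii).
\item The two movable slots $i,j$ merely swap their values, so the multiset, and hence the set, of values in movable slots is preserved. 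This gives (iv).
\end{itemize}

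The only point needing care is the inverse claim: we must confirm that the reverse swap uses the same slot indices and the same threshold $h_i$. This is exactly what \cref{lem:endpoint-positions-preserved} and \cref{lem:threshold-invariance} supply. Everything else is immediate bookkeeping.
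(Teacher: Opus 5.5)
Your proposal is correct and follows essentially the same argument as the paper. The reverse swap is admissible because $\DT$ is unchanged, the threshold vector is fixed by \cref{lem:threshold-invariance}, and $h_i\le h_j$ keeps both exchanged slots movable while all other slots are untouched; you are also slightly more explicit than the paper in citing \cref{lem:endpoint-positions-preserved} to confirm that the reverse swap acts on the same endpoint slots.
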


\begin{proof}
Suppose that an upward exchange at slots $i<j$ exchanges
\[
  a<b<h_i.
\]
After the exchange, the earlier slot contains $b$ and the later slot contains $a$. By \cref{lem:threshold-invariance}, the threshold vector is unchanged, so the reverse exchange satisfies
\[
  a<b<h_i,
\]
which is exactly the downward exchange condition. Admissibility is symmetric. The reverse implication is identical.

Because $h_i\le h_j$, both exchanged values are smaller than the thresholds at both exchanged slots. Thus both slots are movable before and after the exchange, and all other slots are unchanged. The listed invariants follow.
\end{proof}

An upward exchange strictly increases the inversion number of the endpoint-value sequence, while a downward exchange strictly decreases it. Indeed, exchanging $a<b$ at endpoint indices $i<j$ increases that inversion number by
\[
  1+2\#\{k:i<k<j,\ a<e_k<b\}>0.
\]
Thus every sequence of exchanges in a fixed direction terminates.

\subsection{Uniqueness of the two normal forms}

\begin{lemma}[Unique downward normal form]\label{lem:unique-down}
Every $\nu$-avoiding component contains a unique word with no downward exchange.
\end{lemma}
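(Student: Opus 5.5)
Existence is immediate. Downward exchanges strictly decrease the inversion number of the endpoint-value sequence, so repeated downward exchanges from any word in the component terminate. By \cref{lem:threshold-invariance} they stay inside the same $\nu$-avoiding component. The real content is uniqueness. I plan to identify the downward normal form explicitly from the invariants in \cref{lem:exchange-invariants}, and then show that every word in the component with no downward exchange equals that explicit word.

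By \cref{lem:exchange-invariants}, the following data are fixed throughout a component: the endpoint positions, all non-endpoint values, the threshold vector, the set $I$ of movable slots, the values in nonmovable slots, and the multiset $V$ of values filling the movable slots. A word in the component is therefore determined by the assignment of $V$ to the slots of $I$. By \cref{prop:down-characterization} and \cref{lem:3421-raw-criterion}, a $\nu$-avoiding word has no downward exchange if and only if there are no indices $i<j$ with $e_j<e_i<h_i$. Any such pair has $i$ movable, but $j$ may be nonmovable. I would rephrase the condition as follows: for each movable slot $i$, every later endpoint (movable or not) is larger than $e_i$.

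For uniqueness, I would show that this condition forces the assignment greedily from left to right. Process the movable slots $i_1<i_2<\cdots$ in order. Suppose the earlier slots are determined. At slot $i_k$, the value $e_{i_k}$ must be smaller than every value in later movable slots, so it is the minimum of the remaining values in $V$. Since $V$ and the slot set are invariant, two terminal words in the same component make identical choices at every step and hence coincide. The case of a later nonmovable endpoint smaller than $e_{i_k}$ is already excluded by the termination condition, which both terminal words satisfy. This means that in a terminal word the movable values must increase along the movable slots. Since $V$ is fixed, that increasing arrangement is unique.

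The main obstacle is that this argument only compares two terminal words. I must make sure the criterion ``no downward exchange'' used on each of them is exactly ``no pair $e_j<e_i<h_i$''. This equivalence needs $\nu$-avoidance, which \cref{lem:down-choice} supplies and \cref{lem:threshold-invariance} shows is preserved along the component. Once that is in place, the sorting argument needs no confluence or diamond lemma: a terminal word has its movable values increasing left to right, and an increasing arrangement of a fixed set on fixed slots is unique.
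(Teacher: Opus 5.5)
Your proof is correct and follows the paper's argument: a word with no downward exchange must have increasing values in its movable slots (by \cref{lem:down-choice}), the slots and their value set are fixed in the component by \cref{lem:exchange-invariants}, so the increasing arrangement is unique, and existence comes from termination. One small correction: in any pair $e_j<e_i<h_i$ the slot $j$ is automatically movable, because $h_j\ge h_i>e_i>e_j$, so your remark that $j$ may be nonmovable is false, but the case it guards against never occurs and the argument is unaffected.
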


\begin{proof}
If the sequence of values in the movable slots has an inversion $e_i>e_j$ with $i<j$, then
\[
  e_j<e_i<h_i,
\]
and \cref{lem:down-choice} produces a downward exchange. Conversely, every downward exchange displays such an inversion. Hence a word has no downward exchange exactly when the values in its movable slots increase from left to right.

Within a fixed component, the movable slots and their value set are fixed by \cref{lem:exchange-invariants}. There is only one way to place those distinct values increasingly in those slots. Since every downward path terminates, this arrangement occurs and is the unique downward terminal word.
\end{proof}

For a $\nu$-avoiding word $\pi$, denote its unique downward normal form
by $\dn(\pi)$.

\begin{definition}\label{def:movable-block}
We call a maximal interval of consecutive positions, each of which is a movable endpoint position, a \emph{movable block}.
\end{definition}

The values in a movable block are strictly increasing, since each of its positions is a run endpoint.

\begin{lemma}\label{lem:cross-block}
Let $\pi$ avoid $\nu$. Suppose that endpoints $a=e_i$ and $b=e_j$ belong to different movable blocks and satisfy
\[
  i<j,
  \qquad a<b<h_i.
\]
Then $\pi$ admits an upward exchange.
\end{lemma}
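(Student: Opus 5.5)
We will reduce the statement to \cref{lem:proto-to-up}. It suffices to show that the pair $(a,b)$ itself satisfies~\eqref{eq:up-candidate}. We already have $i<j$ and $a<b<h_i$, so the only remaining condition is that the least common ancestor of $a$ and $b$ is binary. Once this is established, \cref{lem:proto-to-up} applies directly, because $\pi$ avoids $\nu$, and it produces an upward exchange.

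Let $p<q$ be the positions of $a$ and $b$. We argue by contradiction and suppose the least common ancestor is not binary. By \cref{cor:endpoint-nonbinary-lca}, $b$ is then the largest entry of the interval $\pi_p\cdots\pi_q$. In particular, every entry $t=\pi_k$ with $p<k<q$ satisfies $t<b$. We will show that every such position $k$ is a movable endpoint position. Since $p$ and $q$ are themselves movable (because $a<h_i$ and $b<h_i\le h_j$ by \eqref{eq:h-monotone}), this would put $p,\dots,q$ in a single movable block, contradicting the hypothesis that $a$ and $b$ lie in different blocks.

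Fix a threshold witness $H<Y$ with $H=h_i$, both entries lying before $p$. Take $p<k<q$ and $t=\pi_k<b$.
\begin{itemize}
\item If $t$ were a descent top with successor $d<t$, then $d<t<b<H<Y$, so $H,Y,t,d$ would form an occurrence of $\nu$. This is impossible, so $t$ is a run endpoint, say $t=e_l$ with $i<l<j$.
\item By \eqref{eq:h-monotone}, $h_l\ge h_i>b>t$. Hence $e_l<h_l$, and the slot $l$ is movable.
\end{itemize}
Thus every intermediate position is a movable endpoint position, which gives the contradiction described above.

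The only delicate point is to use correctly that being in different blocks means some position strictly between $p$ and $q$ fails to be a movable endpoint position. The interval-maximum property of $b$, which comes from the nonbinary ancestor, is exactly what excludes both ways of failing: a descent top is excluded via $\nu$, and a nonmovable endpoint is excluded via the monotonicity of thresholds.
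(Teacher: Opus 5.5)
Your proof is correct and takes essentially the paper's route: both reduce to \cref{lem:proto-to-up} by showing that the least common ancestor of $a,b$ is binary, using $\nu$-avoidance to push any intermediate descent top above $h_i>b$ and threshold monotonicity to make any intermediate endpoint below $b$ movable. The only difference is the order of the argument: the paper shows directly that some descent top lies strictly between $a$ and $b$ and therefore exceeds $b$, while you argue by contradiction, assuming $b$ is the interval maximum and deducing that the whole interval lies in one movable block.
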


\begin{proof}
Let the positions of $a,b$ be $p<q$. We first show that there is a descent top strictly between them. If not, the entire interval from $p$ to $q$ is strictly increasing and every position in it is a run endpoint. Since $a,b$ lie in different movable blocks, an intermediate endpoint value $f$ is nonmovable. Strict increase gives
\[
  a<f<b.
\]
Nonmovability gives $f\ge h(f)$, while monotonicity of the threshold gives $h(f)\ge h_i>b$, a contradiction.

Let $t$ be a descent top strictly between $a$ and $b$. Choose a threshold witness
\[
  H<Y,
  \qquad H=h_i,
\]
before $a$. If $t<H$, and $d<t$ is the immediate successor of $t$, then $H,Y,t,d$ form $\nu$. Since all values are distinct, $t\ne H$, and hence
\[
  t>H>b>a.
\]
Therefore the maximum of the interval from $a$ to $b$ lies strictly inside that interval, so their least common ancestor is binary. The pair $(a,b)$ is an endpoint pair satisfying~\eqref{eq:up-candidate}, and \cref{lem:proto-to-up} yields an upward exchange.
\end{proof}

\begin{lemma}[Unique upward normal form]\label{lem:unique-up}
Every $\nu$-avoiding component contains a unique word with no upward exchange.
\end{lemma}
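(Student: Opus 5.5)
Existence of an upward-terminal word in every $\nu$-avoiding component is immediate: by \cref{lem:threshold-invariance} the component stays $\nu$-avoiding, and every upward path terminates because each upward exchange increases the inversion number of the endpoint-value sequence. The substance is uniqueness. The plan is to give an explicit description of upward-terminal words inside a component, in the style of \cref{lem:unique-down}, and show that this description determines the word.

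\textbf{Step 1: reduction to blocks.} Suppose $\pi$ avoids $\nu$ and admits no upward exchange. If $a=e_i<b=e_j<h_i$ with $i<j$ and the two endpoints lie in different movable blocks, then \cref{lem:cross-block} yields an upward exchange. So in a terminal word, any non-inversion $a<b$ between movable slots in different blocks has $b\ge h_i$. Here $h_i$ is the threshold of the earlier slot, which is fixed on the component by \cref{lem:exchange-invariants}. Within a single block the values increase automatically, and endpoints in one block are adjacent run endpoints. Adjacent endpoints are never admissible by \cref{lem:local-admissibility}, and more generally every pair of slots inside a block has the later value as interval maximum, so their least common ancestor is nonbinary (cf.\ \cref{cor:admissible-binary-lca}).

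\textbf{Step 2: the terminal condition determines the word.} The movable slots are partitioned into blocks $B_1,\dots,B_k$ from left to right. Each block has a common lower threshold, namely that of its first slot, and the thresholds are monotone by \eqref{eq:h-monotone}. The terminal condition says: for blocks $B_s$ before $B_t$, every value $v$ in $B_s$ and every value $w$ in $B_t$ satisfy $w<v$ or $w\ge h(v)$. Since $w$ is itself movable, $w<h(w)$, and $h(v)\le h(w)$.

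I will show that these constraints force a greedy filling, processed from the rightmost block leftward. Alternatively, I will process values in decreasing order: the largest movable value $V$ must sit in the leftmost block it is allowed in, where allowed means $V<h$ of the slot. Placing it in any later block $B_t$, while some earlier block has a smaller value $v$ with $h(v)>V$, would violate the condition. The cleanest route is probably an exchange argument. Take two terminal words $\pi\neq\pi'$ in the same component; they have the same slots, thresholds, and value set. Compare them at the largest value placed in different blocks, and derive a violation of the terminal condition in one of them. This uses block sizes being fixed and within-block increase to match up counts.

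\textbf{Main obstacle.} The hardest step is this combinatorial uniqueness argument, because block capacities are fixed and the constraint is not a simple total order. I would first try the standard trick used for \cref{lem:unique-down}. Define a potential, namely the sorted placement in which blocks are filled so that each block's values form an interval among the values below its threshold. Then show that any terminal word equals it, by induction on the number of blocks, peeling off the last block. In a terminal word, the last block must contain the smallest values available among those below the threshold of each earlier block, subject to its size.

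If this direct characterization resists, the fallback is a diamond-lemma argument. Show local confluence of upward exchanges: any two upward exchanges from a common word can be completed to a common word using further upward exchanges. Combined with termination and the fact that components are connected via exchanges in both directions, together with \cref{lem:unique-down} giving a unique bottom, every word in the component has a unique upward normal form, and they all coincide. This would rely on \cref{lem:proto-to-up} to supply the completing exchanges.
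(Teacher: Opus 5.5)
Step 1 is exactly the reduction the paper uses. By \cref{lem:exchange-invariants}, two upward-terminal words $P,Q$ in one component have the same movable slots, thresholds, movable value set, and values elsewhere. \cref{lem:cross-block} forbids $e_i<e_j<h_i$ for slots $i<j$ in different movable blocks.

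The gap is that all of the content sits in Step 2, which you leave as a list of options and flag as the main obstacle. One of your options already settles it, so commit to it and write it out: compare $P$ and $Q$ at the largest movable value $V$ that lies in different blocks in the two words.
\begin{enumerate}
\item If there is no such value, within-block increase forces $P=Q$.
\item Otherwise, say $V$ lies in block $C$ of $P$ and in a later block of $Q$. Every larger value lies in the same block in both words. By within-block increase, the values of $C$ exceeding $V$ therefore fill the same final slots of $C$ in both words.
\item Let $i$ be the slot just before them. It holds $V$ in $P$, so $V<h_i$.
\item In $Q$, slot $i$ holds some $w$ that is neither $V$ nor larger than $V$: the larger values of $C$ sit in the final slots, and $V$ is not in $C$. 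So $w<V<h_i$, with $V$ in a later block. \cref{lem:cross-block} then gives an upward exchange in $Q$, a contradiction.
\end{enumerate}

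This route differs from the paper's. The paper scans blocks from the left and takes the first block on which $P$ and $Q$ differ. A counting argument then produces an index $t$ with $B_t\notin\{A_1,\dots,A_s\}$ and $B_t>A_t$, and \cref{lem:cross-block} is applied to $A_t,B_t$ in $P$. Your top-down comparison avoids the counting. Iterated, it also describes $\up$ explicitly, in analogy with the sorted description of $\dn$: place the movable values in decreasing order, each in the last free slot of the leftmost block that has a free slot whose threshold exceeds it.

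Discard your other two options.
\begin{itemize}
\item The ``interval'' potential is false. The word $352614$ avoids $\nu$ and admits no upward exchange. Its movable blocks are position $3$ with threshold $3$, and positions $5,6$ with threshold $5$. They hold $\{2\}$ and $\{1,4\}$, so the last block's values skip $2$.
\item The diamond-lemma fallback needs a local-confluence argument that nothing in the paper supplies. \cref{lem:proto-to-up} only produces some upward exchange, not a common completion of two given ones.
\end{itemize}
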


\begin{proof}
Every upward path terminates because each exchange strictly increases
the inversion number of the endpoint-value sequence. Thus a terminal
word exists in every component.

Suppose that distinct words $P,Q$ in the same component admit no
upward exchange. The movable blocks occupy the same positions in both
words. Let $C$ be the first such block on which their values differ, and
write its endpoint indices as $j_1<\cdots<j_s$. Put
\[
  A_r=e^P_{j_r},\qquad B_r=e^Q_{j_r}\qquad(1\le r\le s).
\]
Both sequences are strictly increasing. Let $k$ be their first differing
index. Interchanging $P,Q$ if necessary, assume $A_k<B_k$.
The sets of values in $C$ differ, since an increasing arrangement of a
fixed set is unique. Let $t\ge k$ be the first index for which
$B_t\notin\{A_1,\ldots,A_s\}$.

We claim that $B_t>A_t$. This is immediate if $t=k$. If $t>k$, the
$t-k$ values $B_k,\ldots,B_{t-1}$ belong to the $A$-set and are all
greater than $A_k$. Only $t-k-1$ members of that set lie strictly
between $A_k$ and $A_t$. Hence $B_{t-1}\ge A_t$, and therefore
$B_t>B_{t-1}\ge A_t$.

Set $v=B_t$. In $P$, the value $v$ does not occur in $C$ and cannot
occur before $C$: all earlier movable blocks agree in $P,Q$, and the
values at every other position are fixed throughout the component.
Otherwise $Q$ would contain $v$ twice. Thus $v$ occurs in a later
movable block of $P$. In $Q$, it occupies the movable slot $j_t$, so
$v<h_{j_t}$. The two endpoints in $P$ therefore satisfy
\[
  A_t<v<h_{j_t}
\]
and belong to different movable blocks. By \cref{lem:cross-block},
$P$ admits an upward exchange, a contradiction. The upward terminal word
is therefore unique.
\end{proof}

For a $\nu$-avoiding word $\pi$, denote its unique upward normal form
by $\up(\pi)$.

\begin{corollary}\label{cor:canonical-transversals}
In the graph of $\nu$-avoiding words whose edges are upward exchanges or
downward exchanges, every connected component contains exactly one $3421$-avoiding word and exactly one word $p$ for which $\T(p)$ avoids $2143$. These are respectively the downward and upward normal forms of the component.
\end{corollary}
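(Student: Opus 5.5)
The corollary is obtained by combining the two characterizations with the two uniqueness lemmas. Fix a connected component $K$ of the graph of $\nu$-avoiding words. By \cref{lem:threshold-invariance}, every edge preserves avoidance of $\nu$. Hence every word of $K$ avoids $\nu$, and the component is well defined inside the $\nu$-avoiding class.

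\emph{The downward normal form.} For a word $\pi\in K$, \cref{prop:down-characterization} states that $\pi$ avoids $3421$ if and only if $\pi$ avoids $\nu$ and admits no downward exchange. Since $\pi$ already avoids $\nu$, the $3421$-avoiding words of $K$ are exactly the words of $K$ admitting no downward exchange. By \cref{lem:unique-down}, there is exactly one such word, namely $\dn(\pi)$ for any $\pi\in K$.

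\emph{The upward normal form.} In the same way, \cref{prop:up-characterization} shows that for $p\in K$, the word $\T(p)$ avoids $2143$ if and only if $p$ admits no upward exchange. By \cref{lem:unique-up}, $K$ contains exactly one such word, namely $\up(\pi)$.

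The only point needing care is that the normal forms are taken with respect to the component itself. The exchanges out of a word are edges of the graph, so a word with no downward (respectively upward) exchange is terminal for that direction within its own component. Existence of the normal forms comes from termination, which the uniqueness lemmas already include. There is no substantive obstacle: the content lies entirely in the cited propositions and lemmas, and this step only checks that the hypothesis ``avoids $\nu$'' is automatic throughout a component.
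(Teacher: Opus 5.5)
Your proposal is correct and is essentially the paper's argument: within the $\nu$-avoiding class, \cref{prop:down-characterization} and \cref{prop:up-characterization} identify the two target sets as the words with no downward, respectively no upward, exchange, and \cref{lem:unique-down} and \cref{lem:unique-up} give exactly one such word per component. Your extra remark that \cref{lem:threshold-invariance} keeps every word of a component $\nu$-avoiding is a harmless check that the paper leaves implicit.
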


\begin{proof}
By \cref{prop:down-characterization}, the $3421$-avoiding words are precisely the $\nu$-avoiding words with no downward exchange, and uniqueness follows from \cref{lem:unique-down}. By \cref{prop:up-characterization}, the words $p$ for which $\T(p)$ avoids $2143$ are precisely the $\nu$-avoiding words with no upward exchange, and uniqueness follows from \cref{lem:unique-up}.
\end{proof}

The two normal forms can be computed by making any available exchange
in the chosen direction until none remains. If there are $m$ run
endpoints, this takes at most $\binom{m}{2}$ exchanges, since the endpoint
inversion number changes strictly at each step. The preceding lemmas
show that the output is independent of these choices.

\Needspace{14\baselineskip}
\begin{example}\label{ex:exchange-choices}
Consider the word $3\mid52\mid64\mid71$. Its endpoint positions are
$(1,3,5,7)$, with endpoint values $(3,2,4,1)$ and thresholds
$(-\infty,3,5,6)$. Thus the last three endpoint slots are movable.
There are two possible first downward exchanges. The complete downward
exchange graph of its component consists of the two paths
\[
\begin{aligned}
3526471&\xrightarrow{\text{exchange }2,1}3516472
         \xrightarrow{\text{exchange }4,2}3516274,\\
3526471&\xrightarrow{\text{exchange }4,1}3526174
         \xrightarrow{\text{exchange }2,1}3516274.
\end{aligned}
\]
Both choices give the same downward normal form $3516274$, whose
movable endpoint values are $1,2,4$ in increasing order. Reversing the
arrows gives the upward exchanges and the common upward normal form
$3526471$. Throughout the component, the descent-top set is
$\{5,6,7\}$ and the pinnacle word is $567$.
\end{example}

\subsection{Proof of the main theorem}\label{sec:bijection}

\begin{proof}[Proof of \cref{thm:main}]
Define $\Phi=\dn\circ\T$ and $\Psi=\T\circ\up$. We verify their
domains, show that they are inverse, and then check the statistics.

Let $\sigma\in\Av_n(2143)$ and put $p=\T(\sigma)$. Since $\T^2=\id$, the word $\T(p)=\sigma$ avoids $2143$. By \cref{prop:up-characterization}, the word $p$ avoids $\nu$ and has no upward exchange. Thus it is the unique upward normal form of its component.

Set $q=\dn(p)$. By \cref{lem:threshold-invariance}, the word $q$ still avoids $\nu$, and by definition it has no downward exchange. Hence \cref{prop:down-characterization} gives $q\in\Av_n(3421)$. Thus $\Phi$ is well-defined.

Conversely, let $\rho\in\Av_n(3421)$. By \cref{prop:down-characterization}, it avoids $\nu$ and is the unique downward normal form of its component. Put $p=\up(\rho)$. Then $p$ is the unique upward normal form in that component, so \cref{prop:up-characterization} gives
\[
  \T(p)\in\Av_n(2143).
\]

Uniqueness of the two normal forms yields
\[
  \up(\dn(p))=p,
  \qquad
  \dn(\up(\rho))=\rho.
\]
Together with $\T^2=\id$, these identities imply
\[
  \Psi(\Phi(\sigma))=\sigma,
  \qquad
  \Phi(\Psi(\rho))=\rho.
\]
Finally, $\T$ preserves descent-top values by \cref{lem:T-involution-DT}, and every endpoint exchange is admissible by definition. Therefore $\Phi$ preserves $\DT$. By \cref{prop:T-pinnacles}, the map $\T$ reverses the pinnacle word, while \cref{lem:exchange-pinnacles} shows that every exchange used by $\dn$ preserves it. Hence
\[
  \Pword(\Phi(\sigma))=\rev\bigl(\Pword(\sigma)\bigr).
\]
\end{proof}

\begin{example}[The inverse in \cref{ex:preview}]\label{ex:inverse}
The three maximal decreasing runs of $34251$ are
$3\mid42\mid51$. Their endpoint values are $(3,2,1)$ and their
thresholds are $(-\infty,3,4)$. The only downward exchange swaps $2$
and $1$, giving $3\mid41\mid52=34152$. Its inverse is the upward
exchange of these same endpoint values. Thus the full construction and
its inverse are
\[
\begin{aligned}
  15243&\xrightarrow{\T}34251\xrightarrow{\dn}34152,\\
  34152&\xrightarrow{\up}34251\xrightarrow{\T}15243.
\end{aligned}
\]
The descent-top set is $\{4,5\}$ at every stage. The pinnacle words are
$54,45,45$ along the first line, making the roles of the two operations
visible separately.
\end{example}

\section{Consequences}\label{sec:consequences}

\subsection{Reversal and complementation}

For a permutation $\pi=\pi_1\cdots\pi_n$, define
\[
\begin{aligned}
 \Desbot(\pi)&=\{\pi_{i+1}:\pi_i>\pi_{i+1}\},\\
 \Asctop(\pi)&=\{\pi_{i+1}:\pi_i<\pi_{i+1}\},\\
 \Ascbot(\pi)&=\{\pi_i:\pi_i<\pi_{i+1}\}.
\end{aligned}
\]
An entry $\pi_i$, with $2\le i\le n-1$, is a \emph{vale} if
$\pi_{i-1}>\pi_i<\pi_{i+1}$; write $\Val(\pi)$ for the vale set and
$\Vword(\pi)$ for the vale values in left-to-right order.

Let $r$ and $c$ denote reversal and complementation, and put $rc=r\circ c=c\circ r$. For a set or word $A$ on $[n]$, write $c(A)$ for entrywise complementation $a\mapsto n+1-a$. In particular, complementation does not reverse the order of a word.

\begin{proposition}\label{prop:symmetric-companions}
The following conjugates of $\Phi$ are bijections with the indicated refinements:
\[
\begin{array}{c|c|c}
\text{bijection} & \text{pattern classes} & \text{preserved set-valued statistics}\\ \hline
r\Phi r & \Av_n(3412)\longrightarrow\Av_n(1243) & (\Asctop,\Pin)\\
c\Phi c & \Av_n(3412)\longrightarrow\Av_n(2134) & (\Ascbot,\Val)\\
rc\Phi rc & \Av_n(2143)\longrightarrow\Av_n(4312) & (\Desbot,\Val)
\end{array}
\]
The first map reverses the pinnacle word, while the second and third reverse the vale word.
\end{proposition}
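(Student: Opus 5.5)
The plan is to conjugate $\Phi$ by the symmetries and track how patterns and statistics transform. For each $g\in\{r,c,rc\}$ (all involutions on $S_n$), the map $g\Phi g$ is a bijection from $g(\Av_n(2143))$ onto $g(\Av_n(3421))$. Since $\pi$ avoids $\tau$ exactly when $g(\pi)$ avoids $g(\tau)$, it suffices to compute the images of the two patterns:
\[
r(2143)=3412,\quad r(3421)=1243,\quad c(2143)=3412,\quad c(3421)=2134,\quad rc(2143)=2143,\quad rc(3421)=4312.
\]
This gives the three pairs of pattern classes in the table.

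Next I will record how each symmetry transports the statistics of \cref{thm:main}.

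For reversal, a descent $\pi_i>\pi_{i+1}$ of $\pi$ becomes an ascent of $r(\pi)$ whose top is the same value. Hence $\DT(\pi)=\Asctop(r(\pi))$. Peaks are preserved as values under reversal, so $\Pin(r\pi)=\Pin(\pi)$, and their left-to-right order is reversed: $\Pword(r\pi)=\rev(\Pword(\pi))$.

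For complementation, a descent top becomes an ascent bottom with complemented value, so $\Ascbot(c\pi)=c(\DT(\pi))$. Peaks become vales in the same positions, so $\Vword(c\pi)=c(\Pword(\pi))$.

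For $rc$, a descent top of $\pi$ at the pair $(\pi_i,\pi_{i+1})$ becomes, after reversal, the larger later entry of an ascent. After complementation this is the smaller later entry of a descent, namely a descent bottom. Hence $\Desbot(rc\,\pi)=c(\DT(\pi))$. Likewise $\Vword(rc\,\pi)=c(\rev(\Pword(\pi)))$.

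The last step is the composition. Take $\rho=g\Phi g(\pi)$ and apply \eqref{eq:main-statistics} to $g(\pi)$.

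For $g=r$: $\Asctop(\rho)=\DT(\Phi(r\pi))=\DT(r\pi)=\Asctop(\pi)$. For the pinnacle word, $\Pword(\rho)=\rev\Pword(\Phi(r\pi))=\rev\rev\Pword(r\pi)=\rev\Pword(\pi)$. So the pinnacle set is preserved and its word is reversed.

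For $g=c$: $\Ascbot(\rho)=c\DT(\Phi(c\pi))=c\DT(c\pi)=cc\Ascbot(\pi)=\Ascbot(\pi)$. For the vales, $\Vword(\rho)=c\Pword(\Phi(c\pi))=c\rev\Pword(c\pi)=c\rev c\Vword(\pi)=\rev\Vword(\pi)$, since entrywise complementation commutes with reversal.

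For $g=rc$: the identical computation gives $\Desbot(\rho)=\Desbot(\pi)$. For the vales, three reversals and two complementations leave $\Vword(\rho)=\rev\Vword(\pi)$.

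Set equalities of $\Pin$ and $\Val$ follow from the word identities. There is no real obstacle here; the only care needed is getting the bookkeeping of the transfer identities right, in particular keeping complementation of words entrywise and order-preserving, as the paper stipulates.
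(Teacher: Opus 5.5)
Your proposal is correct and follows essentially the same route as the paper. You compute the images of $2143$ and $3421$ under $r$, $c$, $rc$, record how each symmetry transports descent tops and pinnacle words, and conjugate \eqref{eq:main-statistics}; the only difference is that you write the transfer identities as, for example, $\Asctop(r\pi)=\DT(\pi)$ rather than the paper's $\DT(r\pi)=\Asctop(\pi)$, which is equivalent because each symmetry is an involution.
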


\begin{proof}
The pattern identities are
\[
\begin{aligned}
 r(2143)&=3412, & r(3421)&=1243,\\
 c(2143)&=3412, & c(3421)&=2134,\\
 rc(2143)&=2143, & rc(3421)&=4312.
\end{aligned}
\]
The relevant statistic identities are
\[
\begin{aligned}
 \DT(r\pi)&=\Asctop(\pi),
 &\Pword(r\pi)&=\rev(\Pword(\pi)),\\
 \DT(c\pi)&=c(\Ascbot(\pi)),
 &\Pword(c\pi)&=c(\Vword(\pi)),\\
 \DT(rc\pi)&=c(\Desbot(\pi)),
 &\Pword(rc\pi)&=c(\rev(\Vword(\pi))).
\end{aligned}
\]
Conjugating \eqref{eq:main-statistics} by the corresponding symmetry gives all three assertions; entrywise complementation is injective, so the complemented set equalities are equivalent to the stated ones.
\end{proof}

\subsection{Generating functions}

For indeterminates $\mathbf{x}=(x_1,\ldots,x_n)$ and $\mathbf{y}=(y_1,\ldots,y_n)$, define
\[
  F_n^\tau(\mathbf{x},\mathbf{y})
  =\sum_{\pi\in\Av_n(\tau)}
    \prod_{a\in\DT(\pi)}x_a
    \prod_{b\in\Pin(\pi)}y_b.
\]

By \cref{thm:main},
\[
  F_n^{2143}(\mathbf{x},\mathbf{y})
  =F_n^{3421}(\mathbf{x},\mathbf{y})\qquad(n\ge0).
\]
In particular, the two classes have the same joint distribution of
the number of descents and the number of pinnacles.

For a word $w=w_1\cdots w_k$ of distinct letters, let $\inv(w)$ be its inversion number. For fixed $S,P\subseteq[n]$, define
\[
  G_{n,S,P}^{\tau}(q)
  =\sum_{\substack{\pi\in\Av_n(\tau)\\
                    \DT(\pi)=S,\ \Pin(\pi)=P}}
     q^{\inv(\Pword(\pi))}.
\]

For $k=|P|$, \cref{cor:fiberwise} gives
\[
  G_{n,S,P}^{2143}(q)
  =q^{\binom{k}{2}}G_{n,S,P}^{3421}(q^{-1}),
\]
because reversing a word of $k$ distinct letters changes its inversion
number from $a$ to $\binom{k}{2}-a$.

\subsection{Dumont permutations of the first kind}\label{sec:dumont}

A permutation $\pi\in S_{2n}$ is a \emph{Dumont permutation of the first
kind} if every even entry is followed by a smaller entry, while every odd
entry is followed by a larger entry or is the final entry
\cite{Dumont1974}. Equivalently, its descent-top set is
$\{2,4,\ldots,2n\}$. Let $D^1_{2n}(\tau)$ denote the subset avoiding
$\tau$. Thus \cref{cor:fiberwise} immediately gives the following
refinement of \cite[Conjecture~5.1]{BursteinJones2021}.

\begin{corollary}\label{cor:dumont}
For every $n\ge0$ and every word $\omega$ of distinct letters from $[2n]$,
\[
\begin{split}
 &\bigl|\{\pi\in D^1_{2n}(2143):\Pword(\pi)=\omega\}\bigr|\\
 &\hspace{25mm}=
 \bigl|\{\rho\in D^1_{2n}(3421):\Pword(\rho)=\rev(\omega)\}\bigr|.
\end{split}
\]
Consequently, the pinnacle set has the same distribution on these two
classes, and in particular
\[
  \bigl|D^1_{2n}(2143)\bigr|
  =
  \bigl|D^1_{2n}(3421)\bigr|.
\]
\end{corollary}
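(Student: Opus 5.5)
The plan is to obtain \cref{cor:dumont} directly from \cref{cor:fiberwise}, specializing the descent-top set to $S=\{2,4,\ldots,2n\}$. First I will confirm the characterization stated just before the corollary. Suppose $\pi\in S_{2n}$ has $\DT(\pi)=\{2,4,\ldots,2n\}$. Then every even entry is followed by a smaller entry. No odd entry is a descent top, so each odd entry is either last or followed by a larger entry. The converse is the same observation read backwards. So for every $\tau$,
\[
  D^1_{2n}(\tau)=\{\pi\in\Av_{2n}(\tau):\DT(\pi)=\{2,4,\ldots,2n\}\}.
\]
For $n=0$ both sides consist of the empty permutation, and $S=\emptyset$, so this case is trivial and consistent.

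Next, fix a word $\omega$ of distinct letters from $[2n]$ and apply \cref{cor:fiberwise} with $S=\{2,\ldots,2n\}$. It says that $\Phi$ restricts to a bijection from $\{\pi\in D^1_{2n}(2143):\Pword(\pi)=\omega\}$ onto $\{\rho\in D^1_{2n}(3421):\Pword(\rho)=\rev(\omega)\}$. Comparing cardinalities gives the displayed equality.

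For the consequences, I will take any set $P\subseteq[2n]$. The words $\omega$ whose underlying set is $P$ are exactly the words $\rev(\omega)$ with underlying set $P$, since reversal is a bijection on the words with a given letter set. Summing the first identity over these words, and using $\Pin=$ the letter set of $\Pword$, shows that $\Pin$ is equidistributed on the two Dumont classes. Summing further over all $P$, or equivalently summing over all $\omega$, gives $|D^1_{2n}(2143)|=|D^1_{2n}(3421)|$.

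I do not expect a real obstacle here. The only point that needs care is the reindexing by $\omega\mapsto\rev(\omega)$ when summing, and that is routine.
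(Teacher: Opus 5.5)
Your proposal is correct and follows the paper's own argument: the paper obtains this corollary immediately from \cref{cor:fiberwise} by taking $S=\{2,4,\ldots,2n\}$, using the characterization of Dumont permutations of the first kind by their descent-top set. Your summation over words with a fixed letter set, with the reindexing $\omega\mapsto\rev(\omega)$, correctly gives the equidistribution of $\Pin$ and the equality of cardinalities.
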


For example, $563421\in D^1_6(2143)$ has descent-top set $\{2,4,6\}$
and pinnacle word $64$. Its tree image is $214365$, which already avoids
$3421$, so
\[
  \Phi(563421)=\dn\bigl(\T(563421)\bigr)=214365.
\]
The image has the same descent-top set and pinnacle word $46$.


\begin{thebibliography}{99}

\bibitem{Bloom2014}
J.~Bloom,
\newblock A refinement of Wilf-equivalence for patterns of length $4$,
\newblock \emph{J. Combin. Theory Ser. A} \textbf{124} (2014), 166--177,
\href{https://doi.org/10.1016/j.jcta.2014.01.001}{doi:10.1016/j.jcta.2014.01.001}.

\bibitem{Burstein2025}
A.~Burstein,
\newblock Distribution of sets of descent tops and descent bottoms on restricted permutations,
\newblock \emph{Discrete Math. Theor. Comput. Sci.} \textbf{26} (2025), no.~1, Paper No.~8,
\href{https://doi.org/10.46298/dmtcs.12636}{doi:10.46298/dmtcs.12636}.

\bibitem{BursteinJones2021}
A.~Burstein and O.~Jones,
\newblock Enumeration of Dumont permutations avoiding certain four-letter patterns,
\newblock \emph{Discrete Math. Theor. Comput. Sci.} \textbf{22} (2021), no.~2, Paper No.~7,
\href{https://doi.org/10.46298/dmtcs.6174}{doi:10.46298/dmtcs.6174}.

\bibitem{DavisNelsonPetersenTenner2018}
R.~Davis, S.~A. Nelson, T.~K. Petersen, and B.~E. Tenner,
\newblock The pinnacle set of a permutation,
\newblock \emph{Discrete Math.} \textbf{341} (2018), no.~11, 3249--3270,
\href{https://doi.org/10.1016/j.disc.2018.08.011}{doi:10.1016/j.disc.2018.08.011}.

\bibitem{Dumont1974}
D.~Dumont,
\newblock Interpr\'etations combinatoires des nombres de Genocchi,
\newblock \emph{Duke Math. J.} \textbf{41} (1974), no.~2, 305--318,
\href{https://doi.org/10.1215/S0012-7094-74-04134-9}{doi:10.1215/S0012-7094-74-04134-9}.

\bibitem{Rusu2020}
I.~Rusu,
\newblock Sorting permutations with fixed pinnacle set,
\newblock \emph{Electron. J. Combin.} \textbf{27} (2020), no.~3, Paper No.~P3.23,
\href{https://doi.org/10.37236/9231}{doi:10.37236/9231}.

\bibitem{RusuTenner2021}
I.~Rusu and B.~E. Tenner,
\newblock Admissible pinnacle orderings,
\newblock \emph{Graphs Combin.} \textbf{37} (2021), no.~4, 1205--1214,
\href{https://doi.org/10.1007/s00373-021-02306-9}{doi:10.1007/s00373-021-02306-9}.

\bibitem{SaganSavage2012}
B.~E. Sagan and C.~D. Savage,
\newblock Mahonian pairs,
\newblock \emph{J. Combin. Theory Ser. A} \textbf{119} (2012), no.~3, 526--545,
\href{https://doi.org/10.1016/j.jcta.2011.11.003}{doi:10.1016/j.jcta.2011.11.003}.

\bibitem{Vuillemin1980}
J.~Vuillemin,
\newblock A unifying look at data structures,
\newblock \emph{Commun. ACM} \textbf{23} (1980), no.~4, 229--239,
\href{https://doi.org/10.1145/358841.358852}{doi:10.1145/358841.\allowbreak358852}.

\bibitem{ZhouZangYan2024}
R.~D.~P. Zhou, Y.~Zang, and S.~H.~F. Yan,
\newblock Further refinements of Wilf-equivalence for patterns of length $4$,
\newblock \emph{J. Combin. Theory Ser. A} \textbf{204} (2024), Paper No.~105863,
\href{https://doi.org/10.1016/j.jcta.2024.105863}{doi:10.1016/j.jcta.2024.105863}.

\end{thebibliography}
\end{document}